\documentclass[11pt]{article}

\usepackage[a4paper,margin=1in]{geometry}
\usepackage{amsmath,amssymb,amsthm,mathtools}
\usepackage{enumitem}
\usepackage{microtype}
\usepackage[colorlinks=true,linkcolor=blue,citecolor=blue,urlcolor=blue]{hyperref}

\newtheorem{theorem}{Theorem}[section]
\newtheorem{lemma}[theorem]{Lemma}
\newtheorem{proposition}[theorem]{Proposition}
\newtheorem{corollary}[theorem]{Corollary}

\newtheorem{question}[theorem]{Question}
\theoremstyle{definition}
\newtheorem{definition}[theorem]{Definition}

\newcommand{\F}{\mathbb F}
\newcommand{\calC}{\mathcal C}
\newcommand{\calE}{\mathcal E}
\newcommand{\calP}{\mathcal P}
\newcommand{\eps}{\varepsilon}

\title{Separating hash families with large universe II}
\author{Xin Wei\thanks{Extremal Combinatorics and Probability Group (ECOPRO),
Institute for Basic Science (IBS), Daejeon, South Korea.
Email: \texttt{weixinma@ibs.re.kr}.
Supported by the Institute for Basic Science (IBS-R029-C4).
His work on this project was supported by the Excellent Doctoral Students
Study Abroad Support Program of the University of Science and Technology
of China during his visit to the National University of Singapore.}
\and Gennian Ge\thanks{School of Mathematical Sciences, Capital Normal
University, Beijing 100048, China. Email: \texttt{gnge@zju.edu.cn}.
Supported by the National Key Research and Development Program of China
under Grant 2025YFC3409900, the National Natural Science Foundation of
China under Grant 12231014, and the Beijing Scholars Program.}
}
\date{}

\begin{document}

\maketitle

\begingroup
\renewcommand{\thefootnote}{}
\footnotetext{This work was supported by NSFC
under Grant~12671395.}
\endgroup

\begin{abstract}
For fixed integers $t,k\ge 2$, put $N=(t-1)(k-1)+1$.  
We prove that the maximum universe size of a $t$-perfect hash family of length $N$ over an alphabet of size $q$ satisfies
  $ p_t(N,q)\ge q^{k-o(1)}$.
Together with monotonicity in the length, this gives a positive answer to a question of Blackburn, Etzion, Stinson and Zaverucha for every choice of feasible parameters.  
Our proof combines the Nash--Williams--Tutte theorem, a graph-theoretic elimination argument, and a solution-free-set theorem established in our previous work.
\end{abstract}

\section{Introduction}

Let $X$ and $Y$ be sets of sizes $n$ and $q$, respectively.  
An $(N;n,q)$-hash family is an indexed family of $N$ functions from $X$ to $Y$. Here we call $X$ and $Y$ the {\it universe} and {\it alphabet} of the hash family, respectively.
Equivalently, an $(N; n, q)$-hash family is an $N\times n$ array over an alphabet of size $q$, whose columns are indexed by $X$.  
Throughout the paper, we consider only hash families with pairwise distinct columns.
We further call it a $t$-perfect hash family, abbreviated $t$-PHF, if every set of $t$ columns is separated by some row; that is, its $t$ entries in that row are pairwise distinct.  
We use $\operatorname{PHF}(N;n,q,t)$ to denote a $(N; n,q)$-hash family which is a $t$-PHF, and write $p_t(N,q)$ for the largest possible number of columns.

More generally, let $s\ge2$ and let $w_1,\ldots,w_s$ be positive integers.
We say that a hash family is of \emph{separating type}
$\{w_1,\ldots,w_s\}$ if, for every collection of pairwise disjoint column
sets $C_1,\ldots,C_s$ with $|C_i|=w_i$, there is a row that separates
$C_1,\ldots,C_s$.  Here, a row separates these column sets means the sets of
symbols appearing in them are pairwise disjoint.  The type is a multiset,
so the weights $w_i$ need not be distinct.
An $(N;n,q)$-hash family of separating type $\{w_1,\ldots,w_s\}$ is called
a \emph{separating hash family}, denoted by
$\operatorname{SHF}(N;n,q,\{w_1,\ldots,w_s\})$.  Write
$C(N,q,\{w_1,\ldots,w_s\})$ for the largest possible number of columns
of such a family.  When $s=t$ and $w_1=\cdots=w_s=1$, an $\operatorname{SHF}(N;n,q,\{1,\ldots,1\})$ is exactly a $\operatorname{PHF}(N;n,q,t)$, and $C(N,q,\{1,\ldots,1\})=p_t(N, q)$.

Perfect hash families have a long history in algorithms and combinatorics;
see Mehlhorn~\cite{mehlhorn2013data} and the constructions in
\cite{alon1996derandomization,blackburn2000perfect,stinson2000new,walker2007perfect}.
Separating hash families were introduced as a common generalization of
perfect hash families, frameproof codes and related structures
\cite{stinson2008generalized}.  Bounds on their maximum universe size have
been studied in
\cite{blackburn2008bound,bazrafshan2011bounds,ge2019some,shangguan2016separating}.

The following classical bounds describe the gap that we address.  The upper
bound is due to Blackburn, Etzion, Stinson and
Zaverucha~\cite[Theorem~1]{blackburn2008bound}.  The lower bound follows from
Blackburn's probabilistic construction~\cite{blackburn2000perfect}.
The explicit form below is also recorded in \cite[Section~1]{WEI2025106075}.

\begin{theorem}[Classical bounds]\label{thm:classical-bounds}
Fix integers $N\ge1$, $s\ge2$ and $1\le w_1\le\cdots\le w_s$, and set
$u=\sum_{i=1}^s w_i$.  For all sufficiently large integers $q$,
\begin{equation*}
    \frac{1}{2u}\left(\frac{q}{\binom{u}{2}}\right)^{N/(u-1)}\le C(N,q,\{w_1,\ldots,w_s\})
 \le (w_1w_2+u-w_1-w_2)q^{\lceil N/(u-1)\rceil}.
\end{equation*}
\end{theorem}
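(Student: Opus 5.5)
The lower bound follows from a random construction with alterations; the upper bound follows from a column-deletion (pigeonhole) argument in the style of Blackburn--Etzion--Stinson--Zaverucha.

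\textbf{Lower bound.} Choose $M$ columns independently and uniformly from $Y^N$, with $M$ fixed later. Call an ordered tuple of pairwise distinct columns $(C_1,\dots,C_s)$, with $|C_i|=w_i$ and $u$ columns in total, \emph{bad} if no row separates it.

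Fix one row. For the $u$ symbols to fail separation, some pair of columns lying in different parts must agree in that row. Each such pair agrees with probability $1/q$. There are at most $\binom u2$ pairs, so by the union bound a fixed row fails with probability at most $\binom u2/q$. Rows are independent, so a fixed tuple is bad with probability at most $(\binom u2/q)^N$.

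The number of tuples is at most $M^u$, so the expected number of bad tuples is at most $M^u(\binom u2/q)^N$. We must also handle repeated columns: a coincident pair occurs with probability $q^{-N}$ per pair, giving an expected count of at most $M^2q^{-N}$, which is far smaller for large $q$.

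Take $M=\frac1{u}(q/\binom u2)^{N/(u-1)}$. Then
\[
M^u\bigl(\tbinom u2/q\bigr)^N = M\cdot M^{u-1}\bigl(\tbinom u2/q\bigr)^N = M\,u^{-(u-1)} \le M/4 ,
\]
since $u\ge2$. Fix an outcome in which the number of bad tuples plus coincident pairs is at most its expectation. Delete one column from each bad tuple and from each coincident pair. At least $M/2$ distinct columns remain, and the surviving family is of the required separating type. This gives the bound $\frac1{2u}(q/\binom u2)^{N/(u-1)}$ for large $q$.

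\textbf{Upper bound.} Let $d=\lceil N/(u-1)\rceil$. Partition the $N$ rows into $u-1$ groups $R_1,\dots,R_{u-1}$, each of size at most $d$. Call a column $c$ \emph{$j$-isolated} if its restriction to $R_j$ differs from the restriction of every other column to $R_j$. There are at most $q^{d}$ $j$-isolated columns for each $j$, since their restrictions to $R_j$ are pairwise distinct. Delete all isolated columns for every $j$; this removes at most $(u-1)q^d$ columns.

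Suppose enough columns remain. The goal is to build disjoint sets $C_1,\dots,C_s$ that no row separates. Start from one surviving column $x$. For each $j$, pick a partner $y_j\ne x$ that agrees with $x$ on all of $R_j$. Placing $x$ and $y_j$ in different parts kills every row of $R_j$.

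Filling all $u$ slots requires $x$ together with its $u-1$ partners, and these must be distinct. That is the main obstacle: partners for different $j$ may coincide. The constant $w_1w_2+u-w_1-w_2$ comes from refining this step. I will put $x$ in a part of size $w_1$ and assign partners to the other parts. When partners collide, I will reuse them, but only across groups whose killing pairs still straddle two different parts. The bookkeeping is a counting of how many colliding configurations can be absorbed, roughly $w_1w_2$ of them, by swapping roles between the two smallest parts.

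Concretely, I plan an iterated deletion. For each $j$, repeatedly remove columns whose $R_j$-restriction class has fewer than a threshold number of members, with thresholds chosen so that the total number of removed columns is at most $(w_1w_2+u-w_1-w_2)q^d$. Once this is done, every surviving column has enough distinct agreeing partners in each group to choose them pairwise distinct greedily. If any column survives, the resulting tuple is unseparated, a contradiction. Hence the family has at most $(w_1w_2+u-w_1-w_2)q^{\lceil N/(u-1)\rceil}$ columns.
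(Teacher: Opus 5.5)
The paper proves neither bound; it cites Blackburn's probabilistic construction for the lower bound and Blackburn--Etzion--Stinson--Zaverucha for the upper bound. Your lower bound is the standard alteration argument behind Blackburn's construction, and it works for $u\ge3$. The step ``$u^{-(u-1)}\le 1/4$ since $u\ge2$'' is false at $u=2$, where the coincident-pair term also equals $M/2$. That case is trivial anyway, since all $q^N$ words form a family of type $\{1,1\}$.

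The upper bound has a genuine gap. Your unseparated configuration is a star: a column $x$ and partners $y_j$ agreeing with $x$ on $R_j$, with each pair $\{x,y_j\}$ meant to straddle two parts.
\begin{itemize}
\item When $w_1\ge2$, the part containing $x$ has $w_1-1$ further members, so at most $u-w_1<u-1$ columns lie outside it. With $u-1$ pairwise distinct partners, some $y_j$ shares $x$'s part, and $R_j$ is not certified as killed. For type $\{2,2\}$, for instance, $K_{1,3}$ cannot be split into two parts of size $2$ with all three edges crossing. So no choice of thresholds makes your tuple unseparated.
\item You have misidentified the obstacle. When $w_1=1$, repeated partners are harmless, since you can pad with arbitrary columns. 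When $w_1\ge2$, the star would need partners to coincide, and large classes cannot force that.
\item Choosing $u-1$ distinct partners greedily needs thresholds $2,3,\dots,u$. That means deleting up to $\binom u2 q^d$ columns, already more than the $(u-1)q^d$ the statement allows for perfect hash families.
\end{itemize}

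What is missing is a sequential construction, around which the iterated deletion must be organized.
\begin{enumerate}
\item Pad to $(u-1)d$ rows and merge them into $u-1$ blocks of $d$ rows. You may then assume $N=u-1$ over an alphabet of size $Q=q^d$, since merging preserves separation.
\item If $w_1\ge2$, delete the at most $w_2Q$ columns whose last-row class has size at most $w_2$. The survivors, restricted to the other rows, have type $\{w_1-1,w_2,\dots,w_s\}$. To see this, take disjoint $D_1,\dots,D_s$ with $|D_1|=w_1-1$ and $|D_i|=w_i$ for $i\ge2$. Any $x\in D_2$ has a last-row partner $y\notin D_2$. Add $y$ to $D_1$, or add an unused column if $y$ already lies in some $D_i$. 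This gives a configuration of the original type that the last row does not separate. Hence an earlier row separates it, and therefore separates the $D_i$.
\item After $w_1-1$ such steps there is a part of size $1$. There your star argument, with no distinctness needed, bounds the remaining columns by $(u-w_1)Q$.
\end{enumerate}
The total is $\bigl((w_1-1)w_2+u-w_1\bigr)Q=(w_1w_2+u-w_1-w_2)Q$. The thresholds are $w_2+1$ on $w_1-1$ blocks and $2$ on the others. The unseparated configuration is grown one column at a time, not as a star.
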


When $(u-1)\mid N$, these bounds give
$C(N,q,\{w_1,\ldots,w_s\})=\Theta(q^{N/(u-1)})$ for fixed parameters.
For the remaining lengths, Blackburn et al.\ \cite{blackburn2008bound} posed the following question.

\begin{question}[Blackburn, Etzion, Stinson and Zaverucha
 {\cite[Section~5]{blackburn2008bound}}]\label{ques:blackburn}
Fix integers $N\ge1$, $s\ge2$ and $w_1,\ldots,w_s\ge1$, and set
$u=\sum_{i=1}^s w_i$.  Suppose that $(u-1)\nmid N$.  For every
$\eps>0$, are there infinitely many integers $q$ for which an
$\operatorname{SHF}(N;n,q,\{w_1,\ldots,w_s\})$ exists with
\[
                 n\ge q^{\lceil N/(u-1)\rceil-\eps}?
\]
\end{question}

The first nontrivial case is exponent two.  For $t=3,4$, Shangguan and
Ge~\cite{shangguan2016separating} constructed $t$-PHFs of length $t$ and
size $q^{2-o(1)}$.  In \cite{WEI2025106075}, Wei, Zhang and Ge
proved the same bound for every fixed $t\ge3$.
Consequently, Question~\ref{ques:blackburn} was answered for
$N\in[u,2u-3]$.  Our main
result obtains every exponent.

\begin{theorem}\label{thm:main}
Let $t,k\ge2$ be fixed integers and let $N=(t-1)(k-1)+1$.  Then, as $q\to\infty$,
\[
       p_t(N,q)\ge q^{k-o(1)}.
\]
\end{theorem}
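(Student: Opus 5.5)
Our plan is an algebraic construction followed by deletion of bad columns. Columns will be indexed by vectors $x=(x_1,\dots,x_k)\in\F_p^k$ with $p\approx q$ prime. Each of the $N$ rows will be a linear form $\ell_j(x)=\langle a_j,x\rangle$, with $a_1,\dots,a_N\in\F_p^k$ in general position: any $k$ of them are linearly independent, as for Vandermonde vectors $a_j=(1,\alpha_j,\dots,\alpha_j^{k-1})$. We will then restrict to a large subset $S\subseteq\F_p^k$, $|S|\ge p^{k-o(1)}$, chosen so that every $t$ columns of $S$ are separated by some row.

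First we record when a $t$-set $T=\{x^{(1)},\dots,x^{(t)}\}$ fails. For each row $j$ some pair collides, i.e. $\ell_j(x^{(a)}-x^{(b)})=0$. We record a collision graph $G_j$ on $[t]$ for each $j$, taking a spanning forest of the collisions. The counting $N=(t-1)(k-1)+1$ is exactly the threshold in the Nash--Williams--Tutte theorem. We have $N$ edge labels over $t-1$ ``independent differences'' $x^{(a)}-x^{(1)}$, each living in $k$ dimensions. If some difference vector $d$ is forced to lie in the kernels of at least $k$ of the $a_j$, then general position gives $d=0$, so two columns coincide, which is impossible. So the bad configurations are those in which the $N$ collision edges (one per row) form a multigraph on $[t]$ whose edge constraints do not overdetermine any difference.

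We plan to show, via Nash--Williams--Tutte (a graph with $N$ edges on $t$ vertices either contains $k$ edge-disjoint spanning trees, or admits a partition with few crossing edges), the following. If the multigraph admits $k$ edge-disjoint spanning trees, or more generally is too dense on some vertex subset, then an elimination argument along the trees forces some difference to vanish, a contradiction. Since $N=(t-1)(k-1)+1$ is one more than $(t-1)(k-1)$, Nash--Williams--Tutte shows that in the remaining case there is a vertex partition whose parts carry almost all the edges. The graph-theoretic elimination then reduces the remaining bad configurations to nontrivial linear systems $\sum_i c_i x^{(i)}=0$ among a bounded number of columns, with coefficients drawn from a finite list determined by the $a_j$. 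We expect the correct bookkeeping of which linear relation survives to be the main obstacle. It requires an induction on $t$ that treats each part of the partition as a smaller instance, and a careful choice of the $a_j$ so that every residual system is ``nondegenerate'', meaning not satisfiable trivially by distinct vectors of $S$.

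Finally we invoke the solution-free-set theorem from \cite{WEI2025106075}. Given a finite family of nondegenerate linear systems over $\F_p^k$, that theorem produces a set $S\subseteq\F_p^k$ with $|S|\ge p^{k-o(1)}$ containing no solution in distinct elements; this is a Behrend/Ruzsa-type construction. Taking $S$ free of all residual systems, every $t$-subset of $S$ is separated by some row. This gives a $\operatorname{PHF}(N;p^{k-o(1)},p,t)$, and Bertrand-type prime density passes the bound from primes $p$ to all $q$, giving $p_t(N,q)\ge q^{k-o(1)}$.
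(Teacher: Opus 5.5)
\textbf{The solution-free theorem you invoke does not exist in that form.} The result of \cite{WEI2025106075} does not say that any finite family of ``nondegenerate'' linear systems admits a solution-free set of size $p^{k-o(1)}$ in $\F_p^k$. What it gives (Theorem~\ref{thm:plastic}) is an integer set in $[h]$ of size $h^{1-o(1)}$ that is free of one invariant scalar equation $L(U)$. The coefficients of $L(U)$ must be the cyclic differences of an $(\alpha,\beta)$-plastic set, i.e.\ they grow with $h$ with a prescribed logarithmic separation.

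For a fixed-coefficient invariant equation with no vanishing proper subsum and at least two coefficients of each sign (e.g.\ $x_1+3x_2=2x_3+2x_4$), it is not known in general whether solution-free sets of size $h^{1-o(1)}$ exist; this is Ruzsa's genus-one question. Behrend's sphere construction only covers equations in which one sign occurs once, as with three variables. With fixed Vandermonde nodes, the residual equations you get from $r\ge4$ codewords are of exactly this uncontrolled type.

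The missing idea is to choose the rows so that every residual equation is a plastic permutation equation. The paper does this with the hierarchical entries $\lfloor\exp((\log p)^{c^j})\rfloor$, which give three things:
\begin{itemize}
\item each edge weight $f_e$ is dominated by its head entry;
\item the partial sums of the head coefficients, taken in the reverse-contraction order, form a plastic set after translation (Lemma~\ref{lem:partial-sums});
\item all coefficients are $p^{o(1)}$, so a solution-free subset of an interval of length $p^{1-o(1)}$ works inside $\F_p$ with no wraparound.
\end{itemize}

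\textbf{The elimination is also set up incorrectly.} Since $k(t-1)-N=t-2>0$, the full collision graph never contains $k$ edge-disjoint spanning trees. Moreover, the critical configurations are not ``too dense''. An exact graph on $r\ge3$ vertices gives $N_r=(r-1)(k-1)+1$ equations in $(r-1)k$ difference coordinates. So the collision equations alone leave a solution space of dimension $r+k-2$, which is $r-2$ more than the diagonal, and only a restriction on the code can exclude it.

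The scheme that works restricts a single coordinate:
\begin{itemize}
\item Take codewords $M(x,\mathbf y)^{\mathsf T}$ with $x\in S\subseteq\F_p$ and $\mathbf y\in\F_p^{k-1}$ free.
\item Take a minimal $W$ with $e_H(W)\ge N_{|W|}$ to get an exact graph $G$; no induction on $t$ is needed.
\item Apply Nash--Williams--Tutte to $G-e_0$ to get $k-1$ edge-disjoint spanning trees. These certify that the tail-elimination minors $\det\bigl((Q_o)_{\widehat e_0}\bigr)$ are nonzero (Lemma~\ref{lem:Q}).
\item Weighting the collision equations by these cofactors removes all $(r-1)(k-1)$ tail unknowns and leaves one invariant equation in the heads.
\end{itemize}
$S$ must be free of that equation in the strong sense that every solution has all coordinates equal. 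Freedom from solutions in distinct elements is not enough, since heads of distinct codewords may coincide. Once the heads are equal, the same rank condition forces the tails to be equal.

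Finally, ``any $k$ rows independent'' does not make these minors nonzero. With $X_{e,j}=\alpha_{\rho(e)}^{j-1}$, monomials from different tree partitions can merge and cancel. The paper avoids this by giving every matrix position its own hierarchical entry, for which Lemma~\ref{lem:dominance} keeps every nonzero bounded-degree polynomial nonzero.
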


The case $t=2$ is immediate, since all $q^k$ distinct words of length $k$
form a $2$-PHF.
For $t\ge3$, the case $k=2$ recovers the result of \cite{WEI2025106075}. Our result treats it together with all larger $k$.
Theorem~\ref{thm:main} gives the following confirmative answer to Question~\ref{ques:blackburn}.  

\begin{corollary}\label{cor:full}
Let $s\ge2$ and $N\ge1$ be fixed integers, let $w_1,\ldots,w_s$ be fixed positive integers, set $u=\sum_{i=1}^s w_i$, and suppose $(u-1)\nmid N$.
Then, as $q\to\infty$,
\[
 C(N,q,\{w_1,\ldots,w_s\})
 \ge q^{\lceil N/(u-1)\rceil-o(1)}.
\]
\end{corollary}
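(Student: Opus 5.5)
We derive Corollary~\ref{cor:full} from Theorem~\ref{thm:main} by reduction to perfect hash families, using three facts: a PHF separates more general types, a code can be padded with extra rows, and the matching upper bound in Theorem~\ref{thm:classical-bounds}.

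\textbf{Step 1: a $u$-PHF is a separating hash family of type $\{w_1,\ldots,w_s\}$.} Let $C_1,\ldots,C_s$ be pairwise disjoint column sets with $|C_i|=w_i$. Their union has exactly $u$ columns. In a $\operatorname{PHF}(N;n,q,u)$ some row gives these $u$ columns pairwise distinct entries. In that row the symbol sets of the $C_i$ are therefore pairwise disjoint, so the row separates $C_1,\ldots,C_s$. Consequently
\[
C(N,q,\{w_1,\ldots,w_s\})\ge p_u(N,q).
\]

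\textbf{Step 2: monotonicity in the length.} Adding rows to a $t$-PHF keeps it a $t$-PHF, and columns stay distinct. Hence $p_t(N,q)\ge p_t(N',q)$ whenever $N\ge N'$.

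\textbf{Step 3: choosing $k$.} Set $t=u$ and $k=\lceil N/(u-1)\rceil$.
\begin{itemize}
\item Since $s\ge2$ and each $w_i\ge1$, we have $u\ge2$, so $t\ge2$.
\item Since $(u-1)\nmid N$, we have $N\ge1$ and $N/(u-1)$ is not an integer. If $N<u-1$ then $k=1$; this is the degenerate exponent-one case, which we treat separately below. Otherwise $N>u-1$, so $k\ge2$.
\end{itemize}

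We also need $N\ge N':=(u-1)(k-1)+1$. By definition of the ceiling, $k-1<N/(u-1)$, so $(u-1)(k-1)<N$. Both sides are integers, so $(u-1)(k-1)+1\le N$.

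Applying Step 2 and then Theorem~\ref{thm:main} with $t=u$ and $k\ge2$ gives
\[
C(N,q,\{w_1,\ldots,w_s\})\ge p_u(N,q)\ge p_u(N',q)\ge q^{k-o(1)}.
\]

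\textbf{Step 4: the degenerate case $k=1$.} This means $1\le N<u-1$. We need $n\ge q^{1-o(1)}$ columns. Take the $N\times q$ array whose column $x\in Y$ is the constant word $(x,\ldots,x)$. Any $u$ columns, and hence any disjoint $C_1,\ldots,C_s$, take pairwise distinct symbols in the first row. This gives at least $q$ columns, which suffices.

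No step is a real obstacle: all the depth lies in Theorem~\ref{thm:main}. The only point needing care is the integer inequality $(u-1)(k-1)+1\le N$, which is exactly where the hypothesis $(u-1)\nmid N$ and the ceiling are used. Combined with the upper bound in Theorem~\ref{thm:classical-bounds}, the exponent $\lceil N/(u-1)\rceil$ is then best possible.
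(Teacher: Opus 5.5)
Your proposal is correct and follows the paper's proof essentially step for step. You set $k=\lceil N/(u-1)\rceil$, use the $q$ constant words when $k=1$, and for $k\ge2$ derive $(u-1)(k-1)+1\le N$, apply Theorem~\ref{thm:main} with $t=u$, append rows to reach length $N$, and observe that a $u$-PHF separates every type of total weight $u$. The upper bound in Theorem~\ref{thm:classical-bounds} plays no role in the lower bound itself, only in your closing remark on optimality.
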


\begin{proof}
Let $k=\lceil N/(u-1)\rceil$.  If $k=1$, the $q$ constant words
$(a,\ldots,a)$, one for each alphabet symbol $a$, form a $u$-PHF for
$q\ge u$ and give the asserted bound.  If $k\ge2$, then
\[
                   N\ge(u-1)(k-1)+1.
\]
Apply Theorem~\ref{thm:main} with $t=u$ and append arbitrary rows to the
resulting family to reach length $N$.  The number of columns remains
$q^{k-o(1)}$.  
From the perfect-hash property,
a $u$-PHF separates every type whose total weight is $u$. This proves the corollary.
\end{proof}

The divisible case is covered by Theorem~\ref{thm:classical-bounds}.
Together, these results determine the exponent of $C(N,q,\{w_1,\ldots,w_s\})$ for every fixed positive length and type.

\subsection{Main ideas}

Our construction is motivated by the hypergraph construction of Alon and
Shapira~\cite{alon2006extremal}. Viewing columns as codewords identifies
the universe size with the size of the code. Fix integers $t\ge3$, $k\ge2$,
and put $N=(t-1)(k-1)+1$. We work over $\F_p$ for a sufficiently large prime $p$ with $q/2<p\le q$. We choose an $N\times k$ matrix $M=M(p)$ of column rank $k$
over $\F_p$, so that $\boldsymbol z\mapsto M\boldsymbol z$ is injective
(Corollary~\ref{cor:minors}). Splitting the parameter column vector into a
\emph{head} $x\in\F_p$ and a \emph{tail} $\boldsymbol y\in\F_p^{k-1}$,
we consider the code (Section~\ref{sec:code})
\[
 \calC_p=\left\{M\begin{pmatrix}x\\ \boldsymbol y\end{pmatrix}:
 x\in S,\ \boldsymbol y\in\F_p^{k-1}\right\}\subseteq\F_p^N,
\]
where $S=S(p)\subseteq\F_p$ will be chosen below. The tail ranges over
the whole space, so $|S|=p^{1-o(1)}$ gives
$|\calC_p|=|S|p^{k-1}=p^{k-o(1)}$.

The proof proceeds by contradiction.
If $t$ distinct codewords are not separated, each row supplies an equality
between two of them. We seek suitable collision equations involving a
subset of these codewords. Our aim is to eliminate all tail parameters,
obtain an equation in the heads, and use the solution-free property of
$S$ to force these heads to coincide. The original collision equations
must then force the tails to coincide as well, contradicting distinctness.
This argument has two main difficulties.

The first difficulty is effective elimination. For $r$ codewords,
subtracting a reference tail leaves $(r-1)(k-1)$ scalar unknowns.
One more collision equation provides enough equations, but the
coefficient matrix must have full column rank. We record collisions
by a multigraph with distinct row labels and select a minimal
configuration with more equations than tail differences.
The Nash--Williams--Tutte theorem
\cite{nashwilliams1961edge,tutte1961decomposing} supplies spanning-tree
decompositions that certify the relevant determinant polynomials are
nonzero (Lemma~\ref{lem:Q}). The hierarchical entries of $M$
ensure that their integer values are nonzero and remain so modulo $p$
(Lemma~\ref{lem:dominance}). The same rank condition forces all tail
differences to vanish once the heads coincide.

The second difficulty is controlling the resulting head equation.
Its coefficients sum to zero, but applying the solution-free-set theorem
from our previous work~\cite{WEI2025106075} requires more: after a
permutation, these coefficients must be cyclic differences of a plastic
set, whose elements satisfy specified logarithmic separation conditions
(Theorem~\ref{thm:plastic}). The hierarchy in $M$ also separates
the edge contributions to the head equation (equation~\eqref{eq:flow-order}).
Graph contractions and their reverse expansions then order its
coefficients so that their partial sums, after translation, form
such a plastic set (Section~\ref{sec:head}).

Finally, $S$ must work for every collision pattern. These patterns have
a fixed finite indexing set depending only on $t,k$. The plastic-set
theorem and the common solution-free-set lemma
(Lemma~\ref{lem:common}) yield one set $S$ of size $p^{1-o(1)}$;
restricting the integer interval preserves its solution-free property
over $\F_p$ (Section~\ref{sec:common-head}). Every head equation then
has only solutions with all coordinates equal in $S$. A failed separation
therefore forces equality first of the heads and then of the tails,
completing the contradiction.

\medskip
\noindent\textbf{Organization and notation.}
We first establish the properties needed by the construction, then combine
them to verify the perfect-hash property.
Section~\ref{sec:solution-free} recalls
the solution-free theorem.  Section~\ref{sec:matrix} constructs the matrix,
and Section~\ref{sec:error} proves the elimination determinant property.
Section~\ref{sec:head} treats the plastic partial sums, and
Section~\ref{sec:common-head} constructs the common head set.
Section~\ref{sec:code} proves the perfect-hash property and completes the
proof of Theorem~\ref{thm:main}.

We write $[n]=\{1,\ldots,n\}$ for a positive integer $n$, and all
logarithms below are natural.  Unless stated
otherwise, asymptotic constants may depend on the fixed integers $t,k$ and
the fixed hierarchy parameter $c$, but not on $p$ or $q$.

\section{Plastic sets and invariant equations}
\label{sec:solution-free}

We first recall the equations and solution-free sets used in the
construction.  All equations in this section are over the integers.

\begin{definition}[Invariant equations and solution-free sets]
\label{def:invariant}
Let $r\ge2$ and let $a_1,\ldots,a_r$ be nonzero integers.  A homogeneous
linear equation
\[
                  L:\quad a_1x_1+\cdots+a_rx_r=0
\]
is called \emph{invariant} if $a_1+\cdots+a_r=0$.  A solution is
\emph{trivial} if $x_1=\cdots=x_r$.  An integer set $A$ is
\emph{$L$-free} if every solution in $A^r$ is trivial.  For a family of
equations, a \emph{common solution-free set} is a set that is $L$-free for
each equation $L$ in the family.
\end{definition}

The term invariant refers to simultaneous translation of the variables:
for every integer $d$,
\[
       \sum_{i=1}^r a_i(x_i+d)=\sum_{i=1}^r a_ix_i.
\]
Thus a tuple is a solution if and only if its translate by $(d,\ldots,d)$
is a solution, and translation also preserves triviality.  In particular,
if $A$ is $L$-free, then $A+d=\{x+d:x\in A\}$ is $L$-free.

\begin{definition}[Permutation equations]\label{def:permutation-equation}
For a sequence $U=(u_1,\ldots,u_r)$ of distinct integers, with $r\ge2$, define
\begin{equation}\label{eq:LU}
 L(U):\quad (u_1-u_r)x_1+(u_2-u_1)x_2+\cdots+(u_r-u_{r-1})x_r=0.
\end{equation}
Its coefficients are nonzero and sum to zero, so $L(U)$ is invariant.
\end{definition}

Let $h$ be a positive integer tending to infinity.  It will be the length
of the interval in which we seek a solution-free set.  The number $r$ of
elements is fixed, while $R(h)=\{b_1(h)<\cdots<b_r(h)\}$ may vary with $h$.
We suppress the argument $h$ when writing its elements.  In
Section~\ref{sec:common-head}, we will apply the following results with
$h=m_0(p)$, an interval length chosen from the coefficients of our equations.

\begin{definition}\label{def:plastic}
Fix an integer $r\ge2$ and real numbers $0<\beta<\alpha<1$.  A family of sets
$R(h)=\{b_1(h)<\cdots<b_r(h)\}$ of positive integers is
$(\alpha,\beta)$-plastic with respect to $h$ if, as $h\to\infty$,
\[
 \log b_1=\Omega((\log h)^\beta),\qquad
 \log b_r=O((\log h)^\alpha),
\]
and $\log b_i=O((\log b_{i+1})^\alpha)$ for every $i\in[r-1]$.
The constants in these estimates are independent of $h$.
\end{definition}
The same terminology applies when the elements and interval length are
indexed by $p$, with $h=h(p)\to\infty$; the displayed estimates are then
understood as $p\to\infty$.

Notice that the growth conditions in Definition~\ref{def:plastic}
also explain the ordering of the elements.  Indeed, for each $i<r$,
\[
 \frac{\log b_i}{\log b_{i+1}}
 =O\bigl((\log b_{i+1})^{\alpha-1}\bigr)=o(1),
\]
since all $b_i$ tend to infinity.  Thus $b_i=b_{i+1}^{o(1)}<b_{i+1}$
for all sufficiently large $h$.  The threshold may depend on the
constants in the growth conditions.  We will also use the elementary
estimate
\[
 \exp\bigl(O((\log h)^\theta)\bigr)
 =h^{O((\log h)^{\theta-1})}=h^{o(1)}
 \qquad(0<\theta<1).
\]

The next theorem is the form of \cite[Theorem~2.1]{WEI2025106075} that we
need.  We use the same set size for $R(h)$ and $U$; the cited result also
allows sequences chosen from proper subsets of $R(h)$.

\begin{theorem}[Plastic permutation equations]\label{thm:plastic}
Fix $r\ge3$, $0<\alpha<1$ and $0<\beta\le\alpha^r$, and let
$R(h)\subseteq[h]$ be an $(\alpha,\beta)$-plastic $r$-set with respect to
$h$.  There are constants $0<\gamma=\gamma(r,\alpha)<1$, $K>0$ and $h_0$
such that, for every integer $h\ge h_0$ and every permutation $U$ of
$R(h)$, there is an $L(U)$-free set
$A_U\subseteq\{0,1,\ldots,h-1\}$ satisfying
\[
               |A_U|\ge h\exp\bigl(-K(\log h)^\gamma\bigr).
\]
The constants $K$ and $h_0$ may depend on the fixed parameters and the
plastic family, but not on $h$ or on the chosen permutation.
\end{theorem}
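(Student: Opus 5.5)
The plan is a multi-scale Behrend-type digit construction, run by induction on the levels of the hierarchy $b_1<\cdots<b_r$. First, rewrite $L(U)$ by cyclic summation by parts: with $y_i=x_i-x_{i+1}$ (indices mod $r$), the equation becomes
\[
\sum_{i=1}^r u_i\,y_i=0,\qquad \sum_{i=1}^r y_i=0 .
\]
A solution is trivial exactly when all $y_i$ vanish. The point of plasticity is that, after this rewriting, the coefficients are the elements $b_j$ themselves. For each $i$, $b_i=b_{i+1}^{o(1)}$, in the strong form $\log b_i=O((\log b_{i+1})^\alpha)$. So whenever the $y_i$ are bounded by a quantity $D$ with $rDb_{i}<b_{i+1}$, the equation can be peeled from the top: the term carrying the largest $u$ dominates all the others and must vanish, and then one recurses downward. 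This peeling is independent of the permutation $U$, since it only uses the sizes of the $u_i$. That is why $K$ and $h_0$ can be chosen uniformly in $U$.

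Next, write elements of $\{0,\ldots,h-1\}$ in a base $B$ with $d=\log h/\log B$ digits, and restrict all digits to a range $[0,D)$ so small that $\sum_i u_iy_i$ produces no carries. This needs $B>2r\,b_r D$, and it splits the equation digitwise. Done naively, this loses a factor of about $(b_rr)^{-d}$ in density. That is acceptable only if $\log b_r\cdot d=o(\log h)$, i.e.\ only if $D$ is much larger than $b_r$. Such a large $D$, however, destroys the peeling inequality $rDb_{r-1}<b_r$.

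To resolve this conflict, use nested scales. Choose digit ranges $D_r\gg D_{r-1}\gg\cdots\gg D_1$ interleaved with the $b_j$, of the form $D_j=\exp\bigl((\log b_{j})^{\theta}\bigr)$ for suitable $\theta$ between $\alpha$ and $1$. The condition $\beta\le\alpha^r$ is what leaves room for $r$ such interleavings below $\log h$. At the top scale, large digits are handled not by peeling but by a Behrend sphere condition: restrict the digit vectors to a fixed sphere. Strict convexity then forces the top-scale digit vectors of the relevant combination to agree, using the invariance $\sum y_i=0$. This reduces the problem to an invariant equation with the top coefficient removed, i.e.\ to a plastic family with $r-1$ elements. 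For that equation, apply the induction hypothesis inside each block of lower digits, with base case $r=2$, where the equation reads $b(x_1-x_2)=0$ and is trivial. Each level costs a density factor $\exp(-O((\log h)^{\gamma_j}))$ with $\gamma_j<1$, coming from the sphere restriction and the carry margin. The product over the $r$ levels gives $|A_U|\ge h\exp(-K(\log h)^\gamma)$ with $\gamma=\gamma(r,\alpha)<1$. This uses the estimate $\exp(O((\log h)^\theta))=h^{o(1)}$ recorded after Definition~\ref{def:plastic}.

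I expect the main obstacle to be the bookkeeping at the interface between consecutive scales. There one must simultaneously guarantee three things: no carries across digit blocks, validity of the Behrend convexity step despite the $u_i$ being distinct rather than equal, and enough slack in the exponents so that all $r$ losses stay subexponential in $\log h$. The condition $\beta\le\alpha^r$ should be exactly what makes these exponent constraints compatible. I would first try to verify the case $r=3$ explicitly, to fix the choice of $\theta$, before setting up the general induction.
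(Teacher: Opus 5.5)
The paper does not prove this statement. It imports it from \cite[Theorem~2.1]{WEI2025106075}, so your sketch has to stand on its own, and it has a genuine gap at its central step.

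The Behrend sphere argument works only when one variable is a convex combination of the others, that is, when exactly one coefficient of $L(U)$ has its sign. For $r=3$ this always holds. There the usual digits-on-a-sphere construction with $\log D\approx\sqrt{\log h\,\log b_3}$ already gives $\gamma=(1+\alpha)/2$ with no hierarchy at all, so working out $r=3$ will not test your plan. For $r\ge4$ the number of positive coefficients equals the number of cyclic ascents of $U$. For instance, $U=(b_1,b_3,b_2,b_4)$ gives
\[
 (b_3-b_1)x_2+(b_4-b_2)x_4=(b_4-b_1)x_1+(b_3-b_2)x_3 .
\]
For digit vectors on a common sphere, this only says that two convex combinations of points of the sphere coincide. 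Strict convexity does not forbid that: take two chords through a common interior point. Passing to the $y_i$ adds nothing, since $\sum_i y_i=0$ holds identically.

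At the top scale the digits exceed $b_r/b_{r-1}$, so the size separation of the coefficients cannot be exploited there. An argument using only a sphere and convexity would therefore also settle fixed mixed-sign equations such as $x_1+3x_2=2x_3+2x_4$. Whether those have solution-free sets of size $h^{1-o(1)}$ is, to my knowledge, an open question of Ruzsa.

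The multi-scale architecture does not supply the missing reduction either. With no carries, every digit position satisfies the full equation $L(U)$ on its own, with all $r$ variables and the coefficient $b_r$. Agreement of the top-scale digits of $x_j$ and $x_{j+1}$ says nothing about their lower digits. So the lower blocks still face $L(U)$, not the equation with $b_r$ deleted; collapsing to $r-1$ variables requires $x_j=x_{j+1}$ as integers.

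Moreover, each carry-free digit costs a density factor of order $b_r$, however small its range. As you computed yourself, almost all of $\log h$ must therefore sit in digits with $\log D\gg\log b_r$. There peeling is unavailable, and the digit problem is just the original problem in vector form. The interleaved ranges $D_j$ therefore do not help as described.

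What is missing is an argument that actually uses the hierarchy to handle equations with at least two coefficients of each sign. That is the substance of the cited theorem, and your sketch does not provide it.
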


For clarity, the permutations are indexed by the $r!$ permutations $\pi$
of $[r]$, with $U_\pi(h)=(b_{\pi(1)}(h),\ldots,b_{\pi(r)}(h))$.
Taking the maximum of the constants and thresholds for these finitely many
choices gives one $K$ and one $h_0$ valid for all of them.  The sets $A_U$
may still differ.  Since $\gamma<1$, their common lower bound is
$h^{1-o(1)}$.  The cited theorem gives sets in $[h]$; translating by $-1$
gives the interval above by Definition~\ref{def:invariant}.
Changing the base of the logarithm from $2$ to $e$ only changes the
constants in these estimates.

We shall also need to avoid a fixed finite family of equations
simultaneously.  We use the following result from
\cite{Ge2021SparseHN}, recalled in \cite[Lemma~2.4]{WEI2025106075}.

\begin{lemma}[Common solution-free set]\label{lem:common}
Fix $0<\gamma<1$, $K>0$ and a positive integer $\ell$.  For each sufficiently
large integer $h$, let $L_1,\ldots,L_\ell$ be invariant linear equations
with integer coefficients, which may depend on $h$.  Suppose that for each
$i\in[\ell]$ there is an $L_i$-free set $A_i\subseteq[h]$ satisfying
\[
                   |A_i|\ge h\exp\bigl(-K(\log h)^\gamma\bigr).
\]
Then there is a constant $K'>0$, depending only on $\ell,\gamma,K$, and a
set $A\subseteq[h]$ which is $L_i$-free for every $i\in[\ell]$ and satisfies
\[
                    |A|\ge h\exp\bigl(-K'(\log h)^\gamma\bigr).
\]
\end{lemma}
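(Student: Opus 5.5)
The plan is to build the common solution-free set as an intersection of random translates of the given sets $A_i$, and then choose the translates well by averaging. Two facts make this work. First, each $L_i$ is invariant, so by the remark after Definition~\ref{def:invariant} every translate $A_i+d$ is again $L_i$-free. Second, any subset of an $L_i$-free set is $L_i$-free, since every solution in the subset is already a solution in the larger set, hence trivial. Therefore, for any integers $d_1,\ldots,d_\ell$, the set
\[
 A(d_1,\ldots,d_\ell)=[h]\cap\bigcap_{i=1}^{\ell}(A_i+d_i)
\]
is contained in $[h]$ and is $L_i$-free for every $i\in[\ell]$. It remains to choose the shifts so that this intersection is large.

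Choose $d_1,\ldots,d_\ell$ independently and uniformly from $\{-h,\ldots,h\}$. Fix $x\in[h]$. As $d_i$ ranges over $\{-h,\ldots,h\}$, the value $x-d_i$ ranges over the interval $[x-h,x+h]$. Because $1\le x\le h$, this interval contains $[h]\supseteq A_i$. Hence
\[
 \Pr[x\in A_i+d_i]=\frac{|A_i|}{2h+1}.
\]
These events are independent across $i$, since the shifts are independent. By linearity of expectation,
\[
 \mathbb E\,|A(d_1,\ldots,d_\ell)|
 =h\prod_{i=1}^{\ell}\frac{|A_i|}{2h+1}
 \ge h\prod_{i=1}^{\ell}\frac{|A_i|}{3h}
 \ge h\,3^{-\ell}\exp\bigl(-\ell K(\log h)^\gamma\bigr).
\]
Some choice of shifts attains at least this expectation. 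Take $A$ to be the corresponding intersection.

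Finally, absorb the factor $3^{-\ell}$ into the exponent. For large $h$ we have $(\log h)^\gamma\ge \log 3$, so $3^{-\ell}\ge\exp(-\ell(\log h)^\gamma)$. Thus
\[
 |A|\ge h\exp\bigl(-K'(\log h)^\gamma\bigr)\qquad\text{with } K'=\ell(K+1),
\]
and $K'$ depends only on $\ell$ and $K$, as required. The lemma also allows the equations to depend on $h$; this causes no difficulty, because the argument treats each $h$ separately and uses only the invariance of each $L_i$.

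The only step needing care is checking that the window of shifts covers all of $[h]$ for every $x\in[h]$, which is what makes the membership probability exact and free of any dependence on the structure of $A_i$. I do not expect a substantive obstacle beyond that check.
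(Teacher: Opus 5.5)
Your proof is correct and takes the paper's approach: intersect random translates of the $A_i$, using that translates and subsets of $L_i$-free sets stay $L_i$-free, and pick shifts attaining the average. The differences are only cosmetic. The paper keeps $A_1$ fixed and shifts $A_2,\ldots,A_\ell$ over $\{-(h-1),\ldots,h-1\}$, so the intersection lies in $[h]$ automatically, whereas you shift all $\ell$ sets over $\{-h,\ldots,h\}$ and intersect with $[h]$ explicitly.
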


\begin{proof}
For $i=2,\ldots,\ell$, choose $d_i$ independently and uniformly from
$\{-(h-1),\ldots,h-1\}$.  Averaging the size of
$A_1\cap(A_2+d_2)\cap\cdots\cap(A_\ell+d_\ell)$ gives
\[
 \frac{|A_1|\cdots|A_\ell|}{(2h-1)^{\ell-1}}
 \ge \frac{h}{2^{\ell-1}}
       \exp\bigl(-\ell K(\log h)^\gamma\bigr).
\]
Indeed, for each $x\in A_1$ and each $i\ge2$, precisely $|A_i|$ choices
of $d_i$ place $x$ in $A_i+d_i$.  Some choice of the translations therefore
attains at least the displayed average.  The resulting intersection lies
in $[h]$ and is free of every $L_i$, since translations preserve the
solution-free property.  Absorbing $2^{\ell-1}$ into the constant in the
exponent proves the claim.  For $\ell=1$, simply take $A=A_1$.
\end{proof}

By the same translation property, the lemma holds with $[h]$ replaced by
$\{0,\ldots,h-1\}$.

\section{The hierarchical matrix}
\label{sec:matrix}

We first define the integer sequences that will provide the entries of our
matrix.  Throughout the construction, $p$ tends to infinity through the
primes; reduction modulo $p$ will be stated explicitly when it is used.

\begin{definition}[Hierarchical sequences]\label{def:hierarchy}
Fix a positive integer $L$ and a real number $0<c<1$.  For each prime $p$
and each $j\in[L]$, define
\begin{equation*}
        m_j(p)=\left\lfloor\exp\bigl((\log p)^{c^j}\bigr)\right\rfloor.
\end{equation*}
We write $\mathbf m(p)=(m_1(p),\ldots,m_L(p))$, and omit the argument $p$
when no confusion can arise.
\end{definition}

\begin{definition}[The hierarchical matrix]\label{def:matrix}
For the fixed integers $t\ge3$ and $k\ge2$, put $L=Nk$ and fix $0<c<1$.
Using the sequence in Definition~\ref{def:hierarchy}, define the
$N\times k$ integer matrix $M(p)$ by
\[
 M_{i,j}(p)=m_{(j-1)N+i}(p)\qquad(i\in[N],\ j\in[k]).
\]
We write $M=M(p)$ when $p$ is understood.  Thus the entries are numbered
from top to bottom in the first column, then from top to bottom in the
second column, and so on.
\end{definition}

Since $L$ is fixed,
$\log m_j=(1+o(1))(\log p)^{c^j}$ uniformly for $j\in[L]$.
For any fixed positive integer $D$ and $j<L$, we have
\[
 \log\frac{m_j}{m_{j+1}^D}
 =(1+o(1))(\log p)^{c^j}
   -D(1+o(1))(\log p)^{c^{j+1}}\longrightarrow\infty.
\]
Thus a fixed power of a later entry is smaller
than even the first power of an earlier entry.  This separation implies
that polynomials of bounded degree and bounded integer coefficients cannot
vanish after substitution, once $p$ is sufficiently large.  The following
lemma states this property with a threshold valid for all such polynomials.

\begin{lemma}[Dominant monomial]\label{lem:dominance}
Fix a positive integer $L$, a real number $0<c<1$, and positive integers
$D,H$.  Let $m_1(p),\ldots,m_L(p)$ be as in
Definition~\ref{def:hierarchy}, and let $X_1,\ldots,X_L$ be indeterminates.
There exists $p_0=p_0(L,c,D,H)$ such that, for every prime $p\ge p_0$
and every nonzero polynomial $P\in\mathbb Z[X_1,\ldots,X_L]$ of total
degree at most $D$ whose coefficients have absolute value at most $H$,
\[
                P(\mathbf m(p)):= P\bigl(m_1(p),\ldots,m_L(p)\bigr)\ne0.
\]
Moreover, as $p\to\infty$ through the primes, $\bigl|P(\mathbf m(p))\bigr|=p^{o(1)}$,
uniformly over all such $P$.  If $P$ is nonconstant, then $|P(\mathbf m(p))|\to\infty$.
\end{lemma}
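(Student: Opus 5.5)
The plan is to use a dominant-monomial argument, exploiting the fact that $\log m_j$ decreases very rapidly in $j$. Order the monomials $X^{\mathbf e}=X_1^{e_1}\cdots X_L^{e_L}$ of total degree at most $D$ lexicographically: $\mathbf e\succ\mathbf f$ if, at the first index $j$ where they differ, $e_j>f_j$. Let $\mathbf e$ be the lexicographically largest exponent vector among the monomials of $P$ with nonzero coefficient $a_{\mathbf e}$, so $|a_{\mathbf e}|\ge1$. I will show that for $p\ge p_0$ every other monomial $\mathbf f$ of $P$ satisfies
\[
 \mathbf m^{\mathbf f}\le \mathbf m^{\mathbf e}/\bigl(2H\binom{L+D}{D}\bigr).
\]
Since $P$ has at most $\binom{L+D}{D}$ monomials with coefficients bounded by $H$, this gives $|P(\mathbf m)|\ge \mathbf m^{\mathbf e}/2>0$.

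For the key comparison, let $j$ be the first index with $e_j\ne f_j$, so $e_j\ge f_j+1$. The first $j-1$ factors cancel in the ratio, which leaves
\[
 \frac{\mathbf m^{\mathbf e}}{\mathbf m^{\mathbf f}}
 \ge m_j\prod_{i>j}m_i^{e_i-f_i}
 \ge \frac{m_j}{\prod_{i>j}m_i^{D}}
 \ge \frac{m_j}{m_{j+1}^{LD}}.
\]
Here I use $m_i\ge1$ and $m_i\le m_{j+1}$ for $i>j$, with the convention that the product is empty when $j=L$. The text preceding the lemma already shows that $\log(m_j/m_{j+1}^{LD})\to\infty$. For $j=L$ the ratio is at least $m_L\to\infty$. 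Since there are only finitely many indices $j$, and the constants $D,H,L$ are fixed, one threshold $p_0$ makes all these ratios exceed $2H\binom{L+D}{D}$. This threshold does not depend on $P$.

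For the upper bound, note that $|P(\mathbf m)|\le H\binom{L+D}{D}m_1^D$. Moreover $\log m_1\le(\log p)^c$, so $m_1^D=\exp(O((\log p)^c))=p^{o(1)}$ uniformly in $P$. For the final claim, take $P$ nonconstant. If its leading exponent $\mathbf e$ is nonzero, then $|P(\mathbf m)|\ge \mathbf m^{\mathbf e}/2\ge m_L/2\to\infty$, uniformly in $P$. The lex-leading monomial of a nonconstant polynomial is always nonconstant, because any nonzero exponent vector is lex-larger than $\mathbf 0$.

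The only delicate point is the uniformity of $p_0$ over all admissible $P$. It is handled by the fact that the comparison uses only the finitely many ratios $m_j/m_{j+1}^{LD}$ and the fixed count $\binom{L+D}{D}$ of monomials.
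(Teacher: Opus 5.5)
Your proposal is correct and follows essentially the same route as the paper: you take the lexicographically leading exponent vector and compare at the first differing coordinate, bounding the ratio below by $m_j/m_{j+1}^{LD}$ where the paper uses $\log m_s-D\sum_{j>s}\log m_j$, you get a uniform threshold because only finitely many monomials occur, and you obtain the upper estimate from $H\binom{L+D}{D}m_1^D=p^{o(1)}$. Your step $m_i\le m_{j+1}$ for $i>j$ is justified once $p\ge 3$, since then $\log p>1$ and the sequence $m_i$ is nonincreasing.
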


\begin{proof}
For any exponent vector $\mathbf e=(e_1,\ldots,e_L)\in\mathbb Z_{\ge0}^L$, write
 $|\mathbf e|=\sum_{j=1}^L e_j$,
 $\mathbf X^{\mathbf e}=\prod_{j=1}^L X_j^{e_j}$, and
 $\mathbf m^{\mathbf e}=\prod_{j=1}^L m_j^{e_j}$.
Expand $P$ as
\[
              P(\mathbf X)=\sum_{|\mathbf e|\le D}
                              c_{\mathbf e}\mathbf X^{\mathbf e}.
\]
Order the exponent vectors lexicographically. That is, we say $\mathbf e>\mathbf f$ if, at the first coordinate $s$ where they differ, $e_s>f_s$.
Let $\mathbf e$ be the largest exponent vector such that $c_{\mathbf e}\ne0$. For any other vector
$\mathbf f$ with $c_{\mathbf f}\ne0$, let $s$ be the first coordinate such that $e_s\ne f_s$.
As all exponents are at most $D$, we have
\[
 \log\frac{\mathbf m^{\mathbf e}}{\mathbf m^{\mathbf f}}
 \ge \log m_s-D\sum_{j>s}\log m_j
 =(1-o(1))(\log p)^{c^s}.
\]
The ratio $\mathbf m^{\mathbf f}/\mathbf m^{\mathbf e}$ therefore tends to
zero.  There are at most $\binom{L+D}{D}$ terms, each coefficient has
absolute value at most $H$, and $|c_{\mathbf e}|\ge1$.  Consequently,
\[
 \left|\frac{P(\mathbf m)}{c_{\mathbf e}\mathbf m^{\mathbf e}}-1\right|
 \le H\sum_{\substack{\mathbf f\ne\mathbf e\\c_{\mathbf f}\ne0}}
          \frac{\mathbf m^{\mathbf f}}{\mathbf m^{\mathbf e}}
 =o(1).
\]
Only finitely many exponent vectors occur for fixed $L,D$, so this estimate
is uniform over all the polynomials in the statement. Choose $p_0$ such
that $|P(\mathbf m)/(c_{\mathbf e}\mathbf m^{\mathbf e})-1|\le1/2$
for every prime $p\ge p_0$. This proves nonvanishing. Also,
\[
 |P(\mathbf m)|\le H\binom{L+D}{D}m_1^D
 =\exp\bigl(O((\log p)^c)\bigr)=p^{o(1)}.
\]
If $P$ is nonconstant, its leading exponent vector is nonzero, so
$\mathbf m^{\mathbf e}\to\infty$ and hence $|P(\mathbf m)|\to\infty$.
\end{proof}

\begin{corollary}\label{cor:minors}
Let $M(p)$ be the matrix in Definition~\ref{def:matrix}.
There exists a positive integer $p_0$, depending only on $t,k,c$, such that for every prime $p\ge p_0$, the determinant of every square submatrix of $M(p)$ is a nonzero integer of absolute value less than $p$, and hence remains nonzero modulo $p$.

In particular, every submatrix formed by $k$ rows and all $k$ columns is invertible over $\mathbb F_p$ after reduction modulo $p$.
\end{corollary}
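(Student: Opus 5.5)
We will obtain Corollary~\ref{cor:minors} directly from Lemma~\ref{lem:dominance}, by viewing each minor as a polynomial in the indeterminates $X_1,\ldots,X_L$ with $L=Nk$. Fix a square submatrix of $M(p)$ with row set $I\subseteq[N]$ and column set $J\subseteq[k]$, where $|I|=|J|=s\le k$. Replacing each entry $M_{i,j}$ by $X_{(j-1)N+i}$ gives the symbolic determinant
\[
 P_{I,J}(\mathbf X)=\sum_{\sigma}\operatorname{sgn}(\sigma)\prod_{i\in I}X_{(\sigma(i)-1)N+i},
\]
where $\sigma$ runs over the bijections $I\to J$. Then $\det M(p)[I,J]=P_{I,J}(\mathbf m(p))$.

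\textbf{Step 1: $P_{I,J}$ lies in a fixed class of polynomials.} The polynomial $P_{I,J}$ has total degree $s\le k$. All entries of the symbolic matrix are distinct indeterminates, since the map $(i,j)\mapsto(j-1)N+i$ is injective. Hence distinct bijections $\sigma$ give distinct monomials, and no cancellation occurs among them. So $P_{I,J}$ is nonzero, and its coefficients lie in $\{0,\pm1\}$. Every minor is therefore a nonzero polynomial of degree at most $D=k$ with coefficients bounded by $H=1$. Moreover, there are only finitely many pairs $(I,J)$, and this number depends only on $t$ and $k$.

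\textbf{Step 2: apply Lemma~\ref{lem:dominance}.} Apply the lemma with $L=Nk$, the fixed $c$, $D=k$ and $H=1$. It gives a threshold $p_0'$ beyond which every $P_{I,J}(\mathbf m(p))$ is a nonzero integer. The lemma also gives the uniform bound $|P_{I,J}(\mathbf m(p))|=p^{o(1)}$, so there is a $p_0''$ beyond which all these values have absolute value less than $p$. Explicitly, $|P_{I,J}(\mathbf m)|\le k!\,m_1^k=\exp\bigl(O((\log p)^c)\bigr)<p$ for large $p$.

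Set $p_0=\max(p_0',p_0'')$, which depends only on $t,k,c$. For $p\ge p_0$, each minor is a nonzero integer of absolute value less than $p$. Such an integer is not divisible by $p$, so it stays nonzero modulo $p$. The final sentence of the corollary is the special case $|I|=k$, $J=[k]$: a $k\times k$ matrix with nonzero determinant in $\F_p$ is invertible over $\F_p$.

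No step is a real obstacle. The only point needing care is Step 1, namely checking that the symbolic determinant is not the zero polynomial. This holds because the $N\times k$ entries are indexed by distinct $m_j$'s, so different permutations contribute different monomials.
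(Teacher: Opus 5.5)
Your proposal is correct and follows essentially the same route as the paper. You substitute distinct indeterminates for the entries and observe that distinct permutations give distinct monomials, so each minor is a nonzero polynomial of degree at most $k$ with coefficients $\pm1$; you then apply Lemma~\ref{lem:dominance} with $L=Nk$, $D=k$, $H=1$ to get a nonzero integer of absolute value $p^{o(1)}<p$, and your explicit bound $k!\,m_1^k$ just spells out the size estimate already contained in that lemma.
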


\begin{proof}
Replace each matrix entry $m_j$ by the corresponding indeterminate $X_j$.
Note that for a square submatrix of order $d$, the determinant is a nonzero integer
polynomial of degree $d$ with coefficients $\pm1$. That is because different permutations select different sets of entries and hence different monomials.
Since $d\le k$, Lemma~\ref{lem:dominance} with $L=Nk$, $D=k$ and $H=1$ shows that every
such determinant is a nonzero integer of absolute value $p^{o(1)}<p$ for
sufficiently large $p$.  Its reduction modulo $p$ is therefore nonzero.
\end{proof}

\section{Exact collision multigraphs}
\label{sec:error}

All subsequent claims concern sufficiently large primes $p$, with
thresholds depending only on the fixed construction parameters $t,k,c$.

By regarding each column as a codeword, an $(N;n,q)$-hash family
can be viewed as an indexed collection of $q$-ary words of length $N$.
We say a \emph{collision} happens between two codewords in row $i$ if their
entries in row $i$ are equal.  Suppose that $t$ distinct codewords are not
separated by any row.  We record their collisions by a multigraph:
\begin{itemize}[nosep]
\item Each vertex represents one of the $t$ codewords.
\item In each row $i$, choose one pair that agrees in that row and join
      the corresponding vertices by an edge labeled $i$.
\end{itemize}
Thus each row contributes one edge.  Different rows may give parallel
edges, but no edge is a loop.  When we pass to a smaller vertex set or
select some of the edges, each row contributes at most one edge.
We first study these graphs independently of the code.  Throughout the
rest of the paper, a graph means a finite loopless multigraph, and all
edge counts include multiplicities.

For a multigraph $G$, let $e(G)$ denote its number of edges and let
$e_G(X)$ denote the number of edges with both endpoints in $X\subseteq V(G)$.  For $r\ge2$, put
\[
                 N_r=(k-1)(r-1)+1.
\]
\begin{definition}[Exact collision graphs]\label{def:exact}
An $r$-vertex loopless multigraph $G$, with $2\le r\le t$, is
\emph{exact} if
\begin{equation}\label{eq:exact}
 e(G)=N_r
 \quad\text{and}\quad
 e_G(X)\le(k-1)(|X|-1)
\end{equation}
for every nonempty proper subset $X\subsetneq V(G)$.  An
\emph{exact labeled graph} is an exact graph equipped with an injective
map $\rho:E(G)\to[N]$.  We call $\rho(e)$ the row label of the edge $e$.
In particular, parallel edges receive different labels.
\end{definition}

The inequality in \eqref{eq:exact} is automatic for a singleton, since the graph has no loops.
Every exact graph is connected. In fact, if it had $a\ge2$ components,
summing that inequality over their vertex sets would give
$e(G)\le(k-1)(r-a)<N_r$.

Fix an exact labeled graph $G$ as in Definition~\ref{def:exact}.
Choose an initial orientation of its
edges and a vertex $o\in V(G)$ as the root.  Fix an ordering of the vertices;
when $V(G)=[r]$, use the natural order.  The first column of $M$ gives the
head coefficients, while columns $2,\ldots,k$ give the tail coefficients.
For each edge $e$ and each $j\in\{2,\ldots,k\}$, write
$m_{e,j}=M_{\rho(e),j}(p)$ for the corresponding tail entry.

\begin{definition}[Tail-difference matrix]\label{def:tail-matrix}
Let $X_{e,j}$, for $e\in E(G)$ and $2\le j\le k$, be distinct
indeterminates, and let $\mathbf X$ denote their collection.
Define the polynomial matrix $\mathsf Q_o(\mathbf X)$
with rows indexed by
\[
          (v,j)\in(V(G)\setminus\{o\})\times\{2,\ldots,k\},
\]
and its columns by $E(G)$.  If $e$ is oriented from $u$ to $v$, then
\[
 (\mathsf Q_o(\mathbf X))_{(w,j),e}=
 \begin{cases}
 X_{e,j},&w=u,\\
 -X_{e,j},&w=v,\\
 0,&\text{otherwise},
 \end{cases}
\]
where the rows belonging to $o$ are omitted.  The integer matrix
$Q_o=Q_o(G;p)$ is obtained by substituting $X_{e,j}=m_{e,j}$.
For a matrix whose columns are indexed by $E(G)$, a subscript
$\widehat e$ means that column $e$ is deleted.
\end{definition}

Both matrices have $D_r=(k-1)(r-1)$ rows and $D_r+1=N_r$ columns.
We order the rows first by $j$, then by the fixed vertex order, and
the columns by increasing row label.

For a vector $Y$ indexed by $(v,j)\in (V(G)\backslash\{o\})\times \{2,\ldots,k\}$ and  $e\in E(G)$ directed from $u$ to $v$, by setting $Y_{(o,j)}=0$ for any $j\in \{2,\ldots,k\}$,
the coordinate of $Q_o^{\mathsf T}Y$ indexed by $e$ is exactly
\begin{equation}\label{eq:Q0Y}
     (Q_o^{\mathsf T}Y)_e=\sum_{j=2}^k m_{e,j}(Y_{(u,j)}-Y_{(v,j)}).
\end{equation}

Thus $Q_o^{\mathsf T}$ is the coefficient matrix for the collision
equations in the tail differences from the root.

We will use the following theorem to decompose $G-e_0$ into spanning
trees. This decomposition will identify a monomial with nonzero
coefficient in each elimination determinant.

\begin{theorem}[Nash--Williams--Tutte {\cite{nashwilliams1961edge,tutte1961decomposing}}]\label{thm:tree-packing}
Let $H$ be a finite loopless multigraph with at least two vertices,
and let $d$ be a positive integer.
Then $H$ contains $d$ pairwise edge-disjoint spanning trees if and only if,
for every partition $\calP$ of $V(H)$ into nonempty parts,
\[
                 e_H(\calP)\ge d(|\calP|-1),
\]
where $e_H(\calP)$ is the number of edges with endpoints in different
parts, counted with multiplicity.
\end{theorem}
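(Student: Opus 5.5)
The statement is the classical Nash--Williams--Tutte theorem, and the plan is the standard matroid-theoretic proof: the necessity direction is a direct counting argument, and for sufficiency I would reduce to the matroid union (Edmonds' matroid partition) theorem applied to $d$ copies of the graphic matroid of $H$. Throughout, write $n=|V(H)|$ and, for $F\subseteq E(H)$, let $c(F)$ be the number of connected components of the spanning subgraph $(V(H),F)$. The rank of $F$ in the graphic matroid is then $r(F)=n-c(F)$.

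\textbf{Necessity.} Suppose $T_1,\ldots,T_d$ are pairwise edge-disjoint spanning trees and $\calP$ is a partition of $V(H)$. Contract each part of $\calP$ to a single vertex. The image of each $T_i$ is a connected multigraph on $|\calP|$ vertices whose edges are exactly the edges of $T_i$ crossing $\calP$. Hence each $T_i$ contributes at least $|\calP|-1$ crossing edges. Since the trees are edge-disjoint, $e_H(\calP)\ge d(|\calP|-1)$.

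\textbf{Sufficiency.} I would invoke the matroid union theorem: the maximum size of a set $I\subseteq E(H)$ that is a union of $d$ forests equals
\[
\min_{F\subseteq E(H)}\bigl(d\,r(F)+|E(H)\setminus F|\bigr).
\]
Fix any $F$, and let $\calP$ be the partition of $V(H)$ into the vertex sets of the components of $(V(H),F)$, so $|\calP|=c(F)$. Every edge of $H$ crossing $\calP$ lies outside $F$. Therefore
\[
|E(H)\setminus F|\ge e_H(\calP)\ge d(c(F)-1),
\]
and so
\[
d\,r(F)+|E(H)\setminus F|\ge d(n-c(F))+d(c(F)-1)=d(n-1).
\]
Consequently there are edge-disjoint forests $F_1,\ldots,F_d$ with $\sum_i|F_i|\ge d(n-1)$. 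Each forest has at most $n-1$ edges, so each has exactly $n-1$ edges. A forest on $n$ vertices with $n-1$ edges is a spanning tree, which completes the argument. The hypothesis $n\ge2$ is used only so that the statement is nontrivial.

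\textbf{Main obstacle.} The only substantial ingredient is the matroid union/partition theorem. If it is not taken as known, I would prove the needed inequality by Edmonds' augmenting-path argument. Take a family of edge-disjoint forests $F_1,\ldots,F_d$ of maximum total size, and suppose some edge $e$ lies in no $F_i$. Build the exchange digraph on edges, with an arc from $f$ to $g$ whenever $g\in F_i$ lies on the cycle that $f$ creates in $F_i$. If no augmenting sequence exists, let $F$ be the set of edges reachable from $e$ together with $e$ itself. Then each $F_i\cap F$ spans $F$, so $|F_i\cap F|=r(F)$, and every edge of $F$ is covered except those forcing tightness. This yields equality $\sum_i|F_i|=d\,r(F)+|E\setminus F|$ for that $F$, giving the min--max formula. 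Verifying the exchange step and that $F$ is closed under the reachability relation is the delicate part, so I would follow Edmonds' proof there.
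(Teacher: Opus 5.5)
Your proof is correct. Note, though, that the paper gives no proof of this statement. It quotes the theorem from Nash-Williams and Tutte as a black box and uses only the sufficiency direction, in Lemma~\ref{lem:Q}, after checking the partition inequality for $H=G-e_0$ directly from exactness.

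Your route is the standard derivation:
\begin{itemize}
\item necessity by counting the crossing edges of each tree;
\item sufficiency by applying the matroid union formula $\min_{F}\bigl(d\,r(F)+|E(H)\setminus F|\bigr)$ to $d$ copies of the graphic matroid, and bounding $|E(H)\setminus F|\ge e_H(\calP)$ for the component partition of $(V(H),F)$.
\end{itemize}
Each step is sound, including the final count that forces every forest to have $n-1$ edges. What this buys over the citation is that it isolates the matroid union theorem as the only substantial input. It does not reduce the depth, since that theorem is at least as strong as the statement being proved.

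Your optional sketch of Edmonds' argument needs one repair. Suppose $F$ consists of the edges reachable from a single uncovered edge $e$. You do get $|F_i\cap F|=r(F)$ for every $i$. However, other uncovered edges may lie outside $F$, so you only obtain $\sum_i|F_i|=d\,r(F)+\bigl|(\bigcup_iF_i)\setminus F\bigr|\le d\,r(F)+|E(H)\setminus F|$, which is the wrong inequality. Also, it is edges outside $F$, not edges of $F$, that must all be covered; $e$ itself lies in $F$ and is uncovered. The fix is to start the reachability from the set of all uncovered edges. Then every edge outside $F$ is covered and equality holds. If no edge is uncovered, the partition into singletons already gives $|E(H)|\ge d(n-1)$.
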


\begin{lemma}[Non-vanishing elimination determinants]\label{lem:Q}
Let $G$ be an exact labeled graph on $r$ vertices with a fixed orientation and root
$o$.  For every $e_0\in E(G)$,
$\det\bigl((\mathsf Q_o(\mathbf X))_{\widehat e_0}\bigr)$ is a nonzero polynomial
in the indeterminates $X_{e,j}$.  There exists $p_0=p_0(t,k,c)$ such that, for every prime $p\ge p_0$
and all these choices of $G,o,e_0$ and orientation, its integer value $\det\bigl((Q_o)_{\widehat e_0}\bigr)$ is nonzero and has absolute value
less than $p$.  In particular, $Q_o$ has row rank $D_r$ over $\F_p$.
\end{lemma}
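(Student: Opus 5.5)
The plan has three stages. First, show that $G-e_0$ contains $k-1$ pairwise edge-disjoint spanning trees. Second, use these trees to exhibit one monomial of $\det\bigl((\mathsf Q_o(\mathbf X))_{\widehat e_0}\bigr)$ whose coefficient is nonzero. Third, apply Lemma~\ref{lem:dominance} to pass from the polynomial to its integer value and to reduction modulo $p$.

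\textbf{Step 1: tree packing.} Put $H=G-e_0$; it has $e(H)=(k-1)(r-1)$ edges. I will verify the hypothesis of Theorem~\ref{thm:tree-packing} with $d=k-1$. Take a partition $\calP=\{X_1,\dots,X_s\}$ with $s\ge2$. Every part is a proper nonempty subset, so \eqref{eq:exact} gives $e_G(X_i)\le(k-1)(|X_i|-1)$. The edge $e_0$ can only lower internal counts when passing to $H$, so
\[
e_H(\calP)\ge e(H)-\sum_i e_G(X_i)\ge (k-1)(r-1)-(k-1)(r-s)=(k-1)(s-1).
\]
The trivial partition $s=1$ needs nothing. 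Hence $H$ is the edge-disjoint union of spanning trees $T_2,\dots,T_k$. Since $e(H)=(k-1)(r-1)$ and each tree has $r-1$ edges, these trees use every edge of $H$.

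\textbf{Step 2: a monomial with nonzero coefficient.} The matrix $\mathsf Q_o$ with column $e_0$ deleted is square of order $D_r$. Its rows are indexed by pairs $(v,j)$, and each column $e$ has indeterminate $X_{e,j}$ in the rows carrying index $j$. Consider monomials in which, for each $j$, the columns matched to the rows with index $j$ are exactly the edges of $T_j$; each such term uses the factor $X_{e,j}$ for $e\in T_j$. The product $\prod_j\prod_{e\in T_j}X_{e,j}$ determines the assignment of columns to levels $j$, since every edge occurs once. So the coefficient of this monomial in the determinant equals $\pm\prod_j\det B_j$. Here $B_j$ is the signed incidence matrix of $T_j$ with the row of $o$ deleted, with the variables stripped off and levels ordered consistently. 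The reduced incidence matrix of a spanning tree has determinant $\pm1$. Hence the coefficient is $\pm1$, and the determinant polynomial is nonzero.

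A cleaner way to phrase this is a Laplace expansion along the blocks of rows indexed by $j$. One must make sure that no other choice of column blocks yields the same monomial. This holds because the monomial records, for each column $e$, which level $j$ it was assigned to. I expect this bookkeeping of signs and uniqueness to be the main point to write carefully, though it is not deep.

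\textbf{Step 3: integer values.} Substitute $X_{e,j}=m_{e,j}=M_{\rho(e),j}$. Since $\rho$ is injective, distinct pairs $(e,j)$ map to distinct hierarchy indices $m_{(j-1)N+\rho(e)}$. The determinant therefore becomes a nonzero polynomial in $X_1,\dots,X_L$ with $L=Nk$, of degree $D_r\le(k-1)(t-1)$. Its coefficients are bounded by $H=D_r!$, because each monomial comes from at most $D_r!$ permutation terms with coefficients $\pm1$.

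There are finitely many exact labeled graphs with $r\le t$, together with finitely many roots, orientations and choices of $e_0$. Lemma~\ref{lem:dominance}, applied with these uniform bounds, therefore gives one threshold $p_0(t,k,c)$ beyond which every such value is nonzero and of absolute value $p^{o(1)}<p$. It is then nonzero modulo $p$. This exhibits a $D_r\times D_r$ minor of $Q_o$ that is invertible over $\F_p$, so $Q_o$ has row rank $D_r$ over $\F_p$.
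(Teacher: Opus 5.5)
Your proposal is correct and follows essentially the same route as the paper: the Nash--Williams--Tutte hypothesis is checked via the exactness bound on each part, the permutation terms are grouped by ordered partition $(F_2,\ldots,F_k)$ so that the tree partition contributes the coefficient $\pm\prod_j\det B_j=\pm1$, and the argument finishes with Lemma~\ref{lem:dominance} using $L=Nk$ and $D=(k-1)(t-1)$. One small point: to get a single threshold for all $r\le t$, take the coefficient bound $H=((k-1)(t-1))!$ rather than $D_r!$, which your appeal to uniform bounds already implicitly covers.
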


\begin{proof}
Let $H=G-e_0$.  Then $e(H)=(k-1)(r-1)$.
For every partition $\calP$ of $V(G)$ with at least two parts, as $G$ is exact,
\[ e_H(\calP) =e(H)-\sum_{X\in\calP}e_H(X)
 \ge(k-1)(r-1)-(k-1)\sum_{X\in\calP}(|X|-1)
 =(k-1)(|\calP|-1).\]

The partition with one part satisfies the required inequality as well.
Theorem~\ref{thm:tree-packing} therefore gives $k-1$ edge-disjoint
spanning trees $T_2,\ldots,T_k$ in $H$, which contain exactly $(k-1)(r-1)=e(H)$ edges. Hence, this also yields an edge partition
\begin{equation}\label{eq:tree-partition}
    E(H)=E(T_2)\mathbin{\dot\cup}\cdots  \mathbin{\dot\cup}E(T_k).
\end{equation}

Let
$A=(\mathsf Q_o(\mathbf X))_{\widehat e_0}$
and, for \(j\in\{2,\ldots,k\}\), let
$R_j=(V(G)\setminus\{o\})\times\{j\}$.
Thus \(R_2,\ldots,R_k\) partition the rows of \(A\), while the columns of
\(A\) are indexed by \(E(H)\), and \(|R_j|=r-1\) for every \(j\).

We first group the permutation expansion of \(\det A\) according to these
row blocks.  With the fixed row and column orders, regard a permutation
term as a bijection
\[
\delta:\bigcup_{j=2}^k R_j\longrightarrow E(H).
\]
For such a bijection, define
$F_j=\delta(R_j)$ for any $j\in\{2,\ldots,k\}$.
Then \((F_2,\ldots,F_k)\) is an ordered partition of \(E(H)\) with
\(|F_j|=r-1\) for every \(j\). Conversely, we say that a permutation term
satisfies an ordered partition \((F_2,\ldots,F_k)\) if
\(\delta(R_j)=F_j\) for every \(j\).

If such a permutation term is nonzero, then every entry selected from
column \(e\in F_j\) and row block \(R_j\) is either \(X_{e,j}\) or
\(-X_{e,j}\). Hence every nonzero term satisfying
\((F_2,\ldots,F_k)\) has the same monomial
\[
\prod_{j=2}^k\prod_{e\in F_j}X_{e,j}.
\]
Collecting the permutation terms according to their ordered partitions,
we may therefore write
\[
\det A
=
\sum_{(F_2,\ldots,F_k)}
c(F_2,\ldots,F_k)
\prod_{j=2}^k\prod_{e\in F_j}X_{e,j},
\]
where the sum is over all ordered partitions of \(E(H)\) with
\(|F_j|=r-1\), and \(c(F_2,\ldots,F_k)\in\mathbb Z\).
Since the variables \(X_{e,j}\) are distinct for different pairs
\((e,j)\), different ordered partitions give different monomials.

We now consider the particular ordered partition
$F_j=E(T_j)$, $j\in\{2,\ldots,k\}$
given by \eqref{eq:tree-partition}.
For each \(j\), let
$A_j=A[R_j,E(T_j)]$
be the corresponding \((r-1)\times(r-1)\) submatrix of $A$.  Note that
\[
A_j
=
B_j\operatorname{diag}\bigl(X_{e,j}:e\in E(T_j)\bigr),
\]
where $\operatorname{diag}\bigl(X_{e,j}:e\in E(T_j)\bigr)$ means the diagonal matrix and \(B_j\) is the signed incidence
matrix of \(T_j\) with the row corresponding to the root \(o\) deleted.

The permutation terms producing the monomial
$\mathfrak m(\mathbf X)
=
\prod_{j=2}^k\prod_{e\in E(T_j)}X_{e,j}$
are precisely those satisfying the ordered partition
\((E(T_2),\ldots,E(T_k))\).  After permuting the columns so that the
sets \(E(T_2),\ldots,E(T_k)\) occur consecutively, their total
contribution is
$\varepsilon\prod_{j=2}^k\det A_j$
for some fixed \(\varepsilon\in\{1,-1\}\).  Since \(T_j\) is a spanning
tree, the determinant of its reduced signed incidence matrix is
\(\pm1\). Hence
\[
\det A_j
=
\det B_j\prod_{e\in E(T_j)}X_{e,j}
=
\pm\prod_{e\in E(T_j)}X_{e,j}.
\]
It follows that
\[
c(E(T_2),\ldots,E(T_k))\,\mathfrak m(\mathbf X)
=
\varepsilon\prod_{j=2}^k\det A_j
=
\pm\mathfrak m(\mathbf X),
\]
and therefore
$c(E(T_2),\ldots,E(T_k))=\pm1$.
Thus \(\det A\) is not the zero polynomial.

Finally, since the row labeling is injective, distinct pairs
\((e,j)\) correspond to distinct entries of \(M\), and hence to distinct
hierarchical variables.  The determinant has total degree
\(D_r=(k-1)(r-1)\), and its integer coefficients have absolute value at
most \(D_r!\).  Applying Lemma~\ref{lem:dominance} with
$D=(k-1)(t-1)$, $H=D!$ and $L=Nk$,
we obtain $0<
\left|
\det\bigl((Q_o(G;p))_{\widehat e_0}\bigr)
\right|
<p$ for all sufficiently large primes \(p\).
\end{proof}

For an edge $e$, let $\operatorname{pos}(e)\in[N_r]$ be its position in
the chosen column order.  Define the signed cofactor vector
$\lambda=(\lambda_e)_{e\in E(G)}$ by
\[
 \lambda_e=\lambda_e(G;p)=(-1)^{\operatorname{pos}(e)-1}
                         \det\bigl((Q_o)_{\widehat e}\bigr).
\]
To see why this vector eliminates the tail coordinates, take any row
$\mathbf q$ of $Q_o$ and append it above $Q_o$. The resulting square
matrix has a repeated row, so its determinant is zero. Expanding along
the first row gives
$\sum_{e\in E(G)}q_e\lambda_e=0$.
Doing this for every row yields
\begin{equation}\label{eq:Qlambda}
 Q_o\lambda=0.
\end{equation}
By Lemma~\ref{lem:Q}, all $\lambda_e$ are nonzero both over $\mathbb Z$
and modulo $p$.  Every matrix entry in these cofactors comes from
columns $2,\ldots,k$ of $M$, and has absolute value at most $m_{N+1}$.
The determinant expansion therefore gives
\begin{equation}\label{eq:lambda-bound}
 1\le|\lambda_e|\le D_r!\,m_{N+1}^{D_r}=p^{o(1)}.
\end{equation}

\begin{lemma}[Positive normalization]\label{lem:normalization}
The edges of $G$ can be oriented, and the common sign of its cofactor
vector can be chosen, so that $\lambda_e>0$ for every edge $e$.
The resulting matrix and vector still satisfy $Q_o\lambda=0$,
and the absolute values of the cofactors are unchanged.
\end{lemma}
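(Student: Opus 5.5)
The plan is to analyze how reversing the orientation of a single edge changes the matrix $Q_o$ and the cofactor vector $\lambda$. By Definition~\ref{def:tail-matrix}, reversing the orientation of an edge $f$ multiplies column $f$ of $\mathsf Q_o(\mathbf X)$, hence of $Q_o$, by $-1$. All other columns are unchanged, and the column order is unchanged, since it depends only on the row labels.

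Next we track the effect on each cofactor. For $e\ne f$, the submatrix $(Q_o)_{\widehat e}$ still contains column $f$, so $\det\bigl((Q_o)_{\widehat e}\bigr)$ changes sign. For $e=f$, column $f$ has been deleted, so $\det\bigl((Q_o)_{\widehat f}\bigr)$ is unchanged. Since $\operatorname{pos}(e)$ does not change, the new vector $\lambda'$ satisfies
\[
  \lambda'_f=\lambda_f \quad\text{and}\quad \lambda'_e=-\lambda_e \quad (e\ne f).
\]
Equivalently, $-\lambda'$ agrees with $\lambda$ except that the sign of the coordinate at $f$ is flipped.

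Now fix an initial orientation with cofactor vector $\lambda$. By Lemma~\ref{lem:Q}, every $\lambda_e$ is a nonzero integer. Let $F$ be the set of edges with $\lambda_e<0$, and reverse every edge in $F$; the flips may be performed one at a time.

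Each single flip at $f$ does two things: it negates the coordinate at $f$, and it multiplies the whole vector by $-1$. After $|F|$ flips the resulting vector is $(-1)^{|F|}\mu$, where $\mu_e=|\lambda_e|$ for every edge $e$. We then choose the common sign $(-1)^{|F|}$, that is, we replace $\lambda$ by $-\lambda$ if $|F|$ is odd. This is what the phrase ``the common sign can be chosen'' permits, and the result has every coordinate positive.

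It remains to check the two side conditions. The equation $Q_o\lambda=0$ for the new orientation follows from the same repeated-row expansion that gave \eqref{eq:Qlambda}, applied to the new $Q_o$. It is also preserved under multiplication of $\lambda$ by $-1$. The absolute values of the cofactors are visibly unchanged throughout, because every operation above only changes signs.

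The only real point of care is the bookkeeping in the case $e=f$, where the deleted column is the flipped one. That is what breaks the global symmetry and makes the sign adjustment possible edge by edge. No genuine obstacle arises beyond this.
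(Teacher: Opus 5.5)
Your proposal is correct and takes essentially the paper's approach. The paper performs all reversals at once: it writes the new matrix as $Q_oD$ with $D=\operatorname{diag}(\tau_e)$ and $\tau_e=\operatorname{sgn}(\lambda_e)$, so the new cofactor vector is $(\prod_e\tau_e)D\lambda$, which is exactly your composed single-flip result $(-1)^{|F|}\mu$. The only cosmetic difference is that the paper checks the kernel condition via $(Q_oD)(D\lambda)=Q_o\lambda=0$, whereas you re-run the repeated-row expansion.
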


\begin{proof}
For signs $\tau_e\in\{1,-1\}$, reversing precisely those edges with
$\tau_e=-1$ replaces $Q_o$ by $Q_oD$, where
$D=\operatorname{diag}(\tau_e:e\in E(G))$ in the chosen column order.
Deleting column $e$ multiplies its determinant by
$\prod_{e'\ne e}\tau_{e'}=(\prod_{e'}\tau_{e'})\tau_e$.
Hence the cofactor vector of $Q_oD$ is
$(\prod_e\tau_e)D\lambda$.
Choose $\tau_e=\operatorname{sgn}(\lambda_e)$, where
$\operatorname{sgn}(x)=1$ for $x>0$ and $-1$ for $x<0$.
After changing one common sign, the new vector is
$D\lambda=(|\lambda_e|)_{e\in E(G)}$, and
$(Q_oD)(D\lambda)=Q_o\lambda=0$.
\end{proof}

From now on, $Q_o$ and $\lambda$ refer to this normalized orientation
and positive vector.  Thus $\lambda$ is the cofactor vector up to one
common sign.  For fixed initial data, the signs of the original
cofactors are constant for sufficiently large $p$, by the dominant
monomial argument in Lemma~\ref{lem:dominance}.

\begin{definition}[Edge weights and the head equation]\label{def:head-equation}
For an exact labeled graph $G$ with the choices above, define its
positive edge weights and divergence coefficients by
\[
 f_e(G;p)=M_{\rho(e),1}(p)\lambda_e(G;p),\qquad
 d_v(G;p)=\sum_{e\text{ leaves }v}f_e(G;p)
          -\sum_{e\text{ enters }v}f_e(G;p).
\]
Thus the divergence at a vertex is its total outgoing weight minus its
total incoming weight.  The integer \emph{head equation} of $G$ is
\[
 E_G(p):\quad \sum_{v\in V(G)}d_v(G;p)x_v=0.
\]
We write $f_e$ and $d_v$ when $G,p$ are understood, and put
$d(X)=\sum_{v\in X}d_v$ for $X\subseteq V(G)$.
\end{definition}

By \eqref{eq:lambda-bound},
\[
 0\le\log|\lambda_e|=O(\log m_{N+1})
                         =o(\log m_{\rho(e)}).
\]
Since $f_e=m_{\rho(e)}\lambda_e$, it follows that
\begin{equation}\label{eq:flow-order}
 \log f_e=(1+o(1))\log m_{\rho(e)},
 \qquad
 \rho(e)<\rho(e')\Longrightarrow \log f_{e'}=o(\log f_e).
\end{equation}

For fixed $t$ and $k$, there are only finitely many graphs on vertex sets $[r]$,
$2\le r\le t$, with injective edge labels in $[N]$, together with choices
of orientation and root.  Thus all estimates above have constants and
thresholds that can be chosen independently of these choices.

Every edge contributes once with each sign, so $\sum_vd_v=0$.
For $X\subseteq V(G)$, let $\delta_G(X)$ be the set of edges with
exactly one endpoint in $X$, and put $\sigma_X(e)=1$ if $e$ leaves $X$
and $\sigma_X(e)=-1$ if it enters $X$.  For every nonempty proper
$X\subsetneq V(G)$, cancellation of the internal edges gives
\begin{equation}\label{eq:cut-sum}
 d(X)=\sum_{e\in\delta_G(X)}\sigma_X(e)f_e\ne0.
\end{equation}
Indeed, the cut is nonempty because $G$ is connected. Let $e_*$ be its
edge of smallest row label. By \eqref{eq:flow-order},
\[
 \left|d(X)-\sigma_X(e_*)f_{e_*}\right|
 \le\sum_{e\in\delta_G(X)\setminus\{e_*\}}f_e=o(f_{e_*}),
\]
since there are at most $N$ edges. In particular, every $d_v$ is nonzero,
so $E_G(p)$ is invariant in the sense of Definition~\ref{def:invariant}.
The stronger fact that every proper nonempty vertex set has a nonzero
coefficient sum will also be used when we merge vertex blocks.

\section{From graph divergences to plastic permutation equations}
\label{sec:head}

Let $G$ be an exact labeled graph on $r$ vertices, where $3\le r\le t$.
We use its normalized weights and head equation from
Definition~\ref{def:head-equation}. We now convert $E_G(p)$ into an
equation of the form \eqref{eq:LU} generated by a plastic set.

\begin{definition}[Partial sums]\label{def:partial-sums}
Given an ordering $v_1,\ldots,v_r$ of the vertices, define
\[
 s_0=0,\qquad s_i=\sum_{j=1}^i d_{v_j}\quad(1\le i\le r).
\]

Notice that $s_r=s_0=0$.
We call $\{s_0,\ldots,s_{r-1}\}$ its
\emph{partial-sum set}.
\end{definition}

Equation~\eqref{eq:cut-sum} ensures that the $r$ elements are distinct: equality of two would give a zero coefficient sum
over a nonempty proper set of consecutive vertices.  The purpose of the
next lemma is to find an ordering for which these $s_0, \ldots, s_{r-1}$ also have the required separation of logarithmic sizes.

\begin{lemma}[Plastic partial sums]\label{lem:partial-sums}
Fix an integer-valued function $h=h(p)=p^{1-o(1)}$.
For every exact labeled graph $G$ on $r$ vertices, $3\le r\le t$,
the coefficients $(d_v)_{v\in V(G)}$
have a cyclic ordering whose partial-sum set, after a positive translation,
is $(\alpha_0,\beta_0)$-plastic with respect to $h$.
Here $0<\beta_0\le\alpha_0^t<1$ depend only on
$t,k,c$, where $c$ is the constant in Definition~\ref{def:hierarchy}.
The translated set is contained in $[h]$ for sufficiently large $p$.
The threshold here may also depend on the function $h$.
Consequently, after permuting its variables, $E_G(p)$ is $L(U)$ for a permutation $U$ of an
$(\alpha_0,\beta_0)$-plastic set.
\end{lemma}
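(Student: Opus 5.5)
The idea is to make the partial sums $s_i$ equal to cut sums of prefixes, so that by \eqref{eq:cut-sum} each is dominated by one edge weight $f_e$, and then to choose the ordering so that these dominating edges have pairwise distinct row labels. For an ordering $v_1,\ldots,v_r$ put $X_i=\{v_1,\ldots,v_i\}$. Then $s_i=d(X_i)$, and by \eqref{eq:cut-sum} and \eqref{eq:flow-order},
\[
 s_i=\sigma_{X_i}(e_i)f_{e_i}(1+o(1)),
\]
where $e_i$ is the edge of smallest row label in $\delta_G(X_i)$. If the labels $\rho(e_1),\ldots,\rho(e_{r-1})$ are pairwise distinct, then \eqref{eq:flow-order} gives that the numbers $\log|s_i|$ are pairwise separated in the hierarchical sense. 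Concretely, $\log|s_i|=(1+o(1))(\log p)^{c^{\rho(e_i)}}$.

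\textbf{Step 1: signs, via rotation.} Rotating the cyclic order by $j$ replaces the partial sums by $s_{i+j}-s_j$, indices taken mod $r$. This is a translate of the old set, and the new prefixes are cyclic intervals. I will rotate so that $s_0$ becomes the minimum. Then every nonzero partial sum is positive, and the set $\{0<s_{(1)}<\cdots<s_{(r-1)}\}$ consists of cut sums of cyclic intervals. The positive translation is by $m:=m_L(p)$. Its level $c^L$ is below every level occurring in the $f_e$, and it is also below the cofactor levels, since $\lambda_e\le p^{o(1)}$ comes from columns $\ge2$ of $M$.

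\textbf{Step 2: plasticity.} I take $\alpha_0=c$ and $\beta_0=c^{L}$, so $\beta_0\le c^t=\alpha_0^t$ because $L=Nk\ge t$.

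\begin{itemize}[nosep]
\item $\log b_1=\log m\sim(\log p)^{c^L}$, and this is $\Omega((\log h)^{\beta_0})$ because $\log h\sim\log p$.
\item The largest element is at most $Nm_1m_{N+1}^{D}$, whose logarithm is $(1+o(1))(\log p)^{c}=O((\log h)^{\alpha_0})$. The same bound also gives containment in $[h]$.
\item Consecutive elements sit at levels $c^a$ and $c^b$ with $a>b$, so $(\log p)^{c^a}\le\bigl((\log p)^{c^b}\bigr)^{c}$. This gives $\log b_i=O((\log b_{i+1})^{\alpha_0})$.
\end{itemize}

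Finally, $L(U)$ for $U=(b_1,\ldots,b_r)$ in the cyclic order has coefficients $b_i-b_{i-1}=d_{v_i}$, so $E_G(p)$ is $L(U)$ after relabelling its variables.

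\textbf{Step 3: the combinatorial core, and the main obstacle.} It remains to find a cyclic ordering such that the $r-1$ cyclic-interval cuts $\delta_G(X_i)$ have pairwise distinct minimal row labels. Distinct labels are needed for the hierarchy, and this requirement persists after rotation, since rotation only re-indexes the same cuts. By the cut property, each such minimal edge lies in the minimum spanning tree $T$ of $G$ with respect to row labels, and $T$ has exactly $r-1$ edges. So the goal is a bijection between the prefix cuts and $E(T)$.

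I would build the order by the contraction/expansion scheme. Contract the tree edges in decreasing label order, which amounts to running Kruskal backwards. Then expand in increasing label order: when the edge $e$ is expanded into its two endpoint-blocks, splice the two blocks consecutively, with the boundary between them placed where $e$ is the cheapest crossing edge. Inductively, each block boundary created at the expansion of $e$ should have $e$ as its minimal cut edge, because all smaller-label edges are already internal to, or exactly matched by, earlier boundaries. Verifying this invariant, in particular that earlier smaller edges do not cross later boundaries, is where I expect the real difficulty. If the naive splicing fails, I would try reordering the sub-blocks within each expansion, choosing which child comes first and using reversal of a block, so that the endpoints of the smaller-label edges stay on the same side of every later boundary.
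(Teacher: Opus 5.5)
Your Step 2 (with $\alpha_0=c$, $\beta_0=c^L$ and shift $m_L$) is fine once a suitable ordering exists. The existence of that ordering, however, is the entire content of the lemma, and neither device you propose for it works.

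In Step 3 you expand tree edges in increasing label order. So the smaller-label edges have already been cut when a later boundary is placed, and they do cross it. For example, let $k=3$, $r=4$, and let $G$ be $K_4$ on $\{c,a_1,a_2,a_3\}$ plus a second copy of $ca_1$. This graph is exact with $7=N_4$ edges. Give $ca_1,ca_2,ca_3$ the labels $1,2,3$, so $T$ is a star. Your splicing produces one of eight linear orders, and each of them has two prefixes with the same minimal cut edge; for instance $a_1,c,a_3,a_2$ gives labels $1,2,2$. The freedom to choose which child comes first, or to reverse blocks, is already included in these eight orders. Moreover, the rotation $a_3,a_2,a_1,c$ of that same order gives labels $3,2,1$. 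So your claim that rotation ``only re-indexes the same cuts'' is false: rotating replaces the prefixes $\{v_1,\ldots,v_i\}$ by the cyclic intervals $\{v_{j+1},\ldots,v_{j+i}\}$, which are different cuts.

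Step 1 fails for a more basic reason. Rotation translates the partial-sum set by $-s_j$. Suppose the minimum $s_j<0$ is dominated by some $f_e$. Then $0$ is sent to $|s_j|$, and every partial sum of smaller order than $s_j$ is sent to $|s_j|(1+o(1))$. Unless $s_j$ has the smallest order, the translated set therefore has two elements with asymptotically equal logarithms, and it is not plastic. The orientation is fixed by Lemma~\ref{lem:normalization}, so the signs are not at your disposal. Nonnegativity has to be built into the ordering together with the distinct dominating edges.

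The missing idea is to run the process in the other direction, as the paper does:
\begin{itemize}[nosep]
\item Merge blocks along the smallest-label crossing edge (Kruskal), then undo the merges starting from the last one, i.e.\ the largest label.
\item When a block $X\cup Y$ is split along $e$, every edge of smaller label is internal to a current block. Hence all existing partial sums are $o(f_e)$, while $d(X)=\pm f_e(1+o(1))$.
\item Placing the positive child first inserts a single new partial sum $\xi+d(X)=f_e(1+o(1))$, which is a new maximum, and leaves the others unchanged.
\end{itemize}
This gives $0<\eta_1<\cdots<\eta_{r-1}$, dominated by distinct tree edges, with no rotation at all. In your language, every new prefix is a union of current blocks crossed by $e$ but by no smaller-label edge. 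That is exactly the invariant you could not verify, and in this direction it holds automatically.
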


\begin{proof}
We construct the ordering by first merging blocks of vertices and then
reversing those merges.  Start with the singleton partition of $V(G)$.
At each step choose the edge
of smallest row label whose endpoints lie in two different current blocks,
and merge those blocks.  Connectedness guarantees that the procedure ends
with one block.  The labels of the chosen edges strictly increase.

For a current block $X$, its coefficient is $d(X)$. This is nonzero whenever $X\ne V(G)$ by
\eqref{eq:cut-sum}.  Suppose that the next chosen edge $e$ joins blocks
$X$ and $Y$.  Every earlier edge is already internal to a current block and
therefore cancels from every block sum.  It follows from
\eqref{eq:flow-order} that, for some sign $\eps\in\{1,-1\}$ and real
remainders $r_X,r_Y$,
\[
       d(X)=\eps f_e+r_X,\qquad d(Y)=-\eps f_e+r_Y,
       \qquad |r_X|+|r_Y|=o(f_e),
\]
whereas all other current block coefficients are $o(f_e)$.  The two
coefficients have opposite signs, and merging replaces them by the nonzero
coefficient
\[
                       d(X\cup Y)=d(X)+d(Y),
\]
except at the final merge, when the sum is zero.

We now reverse the sequence of contractions and use it to construct the desired cyclic ordering of the coefficients.

In the last merging step, there are exactly two blocks, say \(X_1\) and \(Y_1\), joined by the last contraction edge \(e_1\). Since
$d(X_1)+d(Y_1)=\sum_{v\in V(G)}d_v=0$
and both coefficients are nonzero, we may assume without loss of generality that \(d(X_1)>0>d(Y_1)\). We order the two coefficients as
$(d(X_1),d(Y_1))=(d(X_1),-d(X_1))$,
and the corresponding partial-sum set is
\[
P_2=\{0,d(X_1)\}.
\]
Here and below, the subscript of \(P_i\) records the number of blocks in the current partition. Denote
\[
\eta_1=d(X_1).
\]
By the analysis of the forward contraction along \(e_1\),
$\eta_1=f_{e_1}+o(f_{e_1})$.

Next, we recover \(P_{i+1}\) recursively from \(P_i\) for \(2\le i\le r-1\). Suppose that the reverse process has reached a partition into \(i\) blocks, with partial-sum set \(P_i\). The next reverse step undoes a forward contraction along an edge \(e_i\): a current block \(X_i\cup Y_i\) is split into the two blocks \(X_i\) and \(Y_i\). Relabel \(X_i\) and \(Y_i\), if necessary, so that
$d(X_i)>0>d(Y_i)$.
Since
$d(X_i\cup Y_i)=d(X_i)+d(Y_i)$,
we replace the increment \(d(X_i\cup Y_i)\) in the current cyclic ordering by the two consecutive increments \(d(X_i)\) and \(d(Y_i)\), in this order. Let \(\xi_i\in P_i\) be the partial sum at which the increment \(d(X_i\cup Y_i)\) starts. The replacement is therefore
\[
\xi_i
\longrightarrow
\eta_i:=\xi_i+d(X_i)
\longrightarrow
\xi_i+d(X_i\cup Y_i).
\]
Thus all elements of \(P_i\) remain unchanged and exactly one new element \(\eta_i\) is inserted. Hence
\[
P_{i+1}=P_i\cup\{\eta_i\}.
\]

We next show that \(\eta_i\) is larger than every element of \(P_i\). Consider the corresponding stage of the forward contraction process immediately after \(e_i\) was contracted. Since \(e_i\) was the edge of smallest row label joining two distinct blocks just before that contraction, every edge with row label at most \(\rho(e_i)\) is then internal to one of the current blocks. Such edges cancel from the corresponding block coefficients. Therefore, for every current block \(Z\),
\[
|d(Z)|
\le
\sum_{\rho(e')>\rho(e_i)}f_{e'}
=o(f_{e_i}).
\]
Since the elements of \(P_i\) are partial sums of these \(i\) block coefficients,
\[
\operatorname{diam}(P_i)
\le
\sum_Z|d(Z)|
=o(f_{e_i}).
\]
As \(0\in P_i\), every element of \(P_i\), and in particular \(\xi_i\), has absolute value \(o(f_{e_i})\).

On the other hand, by the analysis of the forward contraction along \(e_i\),
\[
d(X_i)=f_{e_i}+o(f_{e_i}),
\qquad
d(Y_i)=-f_{e_i}+o(f_{e_i}).
\]
Hence
\[
\eta_i
=
\xi_i+d(X_i)
=
f_{e_i}+o(f_{e_i}).
\]
Since every element of \(P_i\) has absolute value \(o(f_{e_i})\), the new element \(\eta_i\) is the unique maximum of \(P_{i+1}\) for all sufficiently large \(p\).

Starting from \(P_2\) and applying this recursively for \(i=2,\ldots,r-1\), we obtain
\[
P_2\subset P_3\subset\cdots\subset P_r,
\]
where \(P_i\) consists of \(i\) distinct nonnegative elements and the unique element of \(P_{i+1}\setminus P_i\) is the new maximum of \(P_{i+1}\). In particular,
\[
P_r=\{0,\eta_1,\ldots,\eta_{r-1}\},
\qquad
0<\eta_1<\cdots<\eta_{r-1}.
\]

The forward contraction edges are chosen in strictly increasing order of their row labels, whereas \(e_1,e_2,\ldots,e_{r-1}\) are indexed in the reverse order. Thus
\[
\rho(e_1)>\rho(e_2)>\cdots>\rho(e_{r-1}).
\]
By \eqref{eq:flow-order} and the estimates above, for any $i\in [r-1]$,
$\tfrac{\log \eta_i}{\log m_{\rho(e_i)}}\to 1$.

We now translate \(P_r\) by
$\zeta=m_{N+1}(p)$.
Write the resulting set in increasing order as
$R=\{b_1<\cdots<b_r\}$,
where
\[
b_1=\zeta,
\qquad
b_{i+1}=\zeta+\eta_i
\quad (1\le i\le r-1).
\]
Since \(\rho(e_i)\le N\), the hierarchy implies that \(\zeta=o(\eta_i)\) for every \(i\). Consequently, for any $i\in [r-1]$,
$\tfrac{\log b_{i+1}}{\log m_{\rho(e_i)}}\to 1$.

Set
$\alpha_0=\frac{1+c}{2}$ and
$\beta_0=\frac12\min\{c^{N+1},\alpha_0^t\}$. Hence $\beta_0\le \alpha_0^t\le \alpha_0^r$.
Our next goal is to show that \(R\) is \((\alpha_0,\beta_0)\)-plastic with respect to \(h\).
First, \(b_r=p^{o(1)}\) and \(h=p^{1-o(1)}\), so
$R\subseteq[h]$.
Then,
$\log b_1=\log m_{N+1}(p)$
has logarithmic order \((\log p)^{c^{N+1}}\). As
$\tfrac{\log h}{\log p}\to1$
and \(\beta_0<c^{N+1}\), it follows that
\[
\log b_1
=
\Omega\bigl((\log h)^{\beta_0}\bigr).
\]
Also, as \(\rho(e_{r-1})\ge1\) and \(c<\alpha_0\),
\[
\log b_r
=
O\bigl((\log p)^c\bigr)
=
O\bigl((\log h)^{\alpha_0}\bigr).
\]
It remains to compare consecutive elements. Since \(\rho(e_1)\le N\),
$c^{N+1}
\le
c\,c^{\rho(e_1)}
<
\alpha_0c^{\rho(e_1)}$.
Hence
\[
\log b_1
=
O\bigl((\log b_2)^{\alpha_0}\bigr).
\]
For \(2\le i\le r-1\), we have
\(\rho(e_{i-1})>\rho(e_i)\), so
$c^{\rho(e_{i-1})}
\le
c\,c^{\rho(e_i)}
<
\alpha_0c^{\rho(e_i)}$.
It follows that
\[
\log b_i
=
O\bigl((\log b_{i+1})^{\alpha_0}\bigr)
\qquad (2\le i\le r-1).
\]
Thus \(R\) is \((\alpha_0,\beta_0)\)-plastic.

Finally, let \(v_1,\ldots,v_r\) be the cyclic ordering of the vertices obtained from the reverse process, retaining the starting point $0$. Let \(s_0,\ldots,s_r\) be the corresponding partial sums from Definition~\ref{def:partial-sums}. Then
\[
P_r=\{s_0,\ldots,s_{r-1}\},
\qquad
s_r=s_0=0.
\]
Define
\[
U=(s_1+\zeta,\ldots,s_r+\zeta).
\]
Since \(s_r=s_0\), \(U\) is a permutation of \(R\). Its cyclic differences satisfy
\[
s_1-s_r=d_{v_1},
\qquad
s_i-s_{i-1}=d_{v_i}
\quad (2\le i\le r).
\]
Therefore, after ordering the variables as
$x_{v_1},\ldots,x_{v_r}$,
$E_G(p)$ is exactly \(L(U)\).
\end{proof}

\section{The common head set}
\label{sec:common-head}

We now choose one set of head coordinates that avoids every equation from an exact collision graph.

\begin{definition}[The equation family]\label{def:equation-family}
Let $\mathcal G$ be the collection of exact labeled graphs on vertex
sets $[r]$, $3\le r\le t$, together with all choices of initial orientation
and root.  The vertex order is the natural order, and the row labels lie
in $[N]$.  For each $G\in\mathcal G$, use the normalization of
Lemma~\ref{lem:normalization} and the head equation of
Definition~\ref{def:head-equation}.  Define
\[
 \calE(p)=\{E_G(p):G\in\mathcal G\},\qquad
 B(p)=2\max_{G\in\mathcal G}\sum_{v\in V(G)}|d_v(G;p)|,
\]
and let
\[
 m_0=m_0(p)=\left\lfloor\frac{p-1}{B(p)}\right\rfloor
\]
be the length of the integer interval in which we seek a common
solution-free set.
\end{definition}

The indexing collection $\mathcal G$ is finite and depends only on $t,k$:
for each of the $N$ possible row labels, choose either no edge or one
of the finitely many vertex pairs.  It is also nonempty. On three
vertices, take edge multiplicities $k-1,k-1,1$ on the three pairs.
This is an exact graph with $2k-1\le N$ edges, so its edges can receive
distinct row labels. Thus $B(p)$ is positive for sufficiently large $p$.
By \eqref{eq:flow-order} and Definition~\ref{def:head-equation},
$B(p)=p^{o(1)}$, and hence $m_0=p^{1-o(1)}$ and
$\log m_0=(1+o(1))\log p$.

\begin{proposition}[The common head set]\label{prop:head-set}
There are constants $0<\gamma<1$, $K>0$ and $p_0$, depending only on
$t,k,c$, such that for every prime $p\ge p_0$ there is a set
$S=S(p)\subseteq\{0,\ldots,m_0(p)-1\}$ satisfying
\[
 |S|\ge m_0\exp\bigl(-K(\log m_0)^\gamma\bigr)=p^{1-o(1)}.
\]
Viewed as a subset of $\F_p$, the set $S$ is solution-free for every
equation in $\calE(p)$: if $x_v\in S$ for all $v\in V(G)$ and
$\sum_vd_v(G;p)x_v=0$ in $\F_p$, then all $x_v$ are equal.
\end{proposition}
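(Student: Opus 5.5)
The plan is to find, for each $G\in\mathcal G$, a large integer set in $\{0,\ldots,m_0-1\}$ that is $E_G(p)$-free. These sets will then be intersected using Lemma~\ref{lem:common}. Finally, the interval length $m_0$ lets us pass from solutions over $\F_p$ to solutions over $\mathbb Z$.

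\emph{Step 1.} Fix $G\in\mathcal G$ on $r$ vertices, $3\le r\le t$, and put $h=m_0(p)$. We know $h=p^{1-o(1)}$ and $h$ is integer-valued, so Lemma~\ref{lem:partial-sums} applies. It gives a cyclic ordering of the vertices and a translate $R$ of the partial-sum set such that $R\subseteq[h]$ is $(\alpha_0,\beta_0)$-plastic with respect to $h$. After reordering the variables, $E_G(p)$ is exactly $L(U)$ for a permutation $U$ of $R$. Since $\beta_0\le\alpha_0^t\le\alpha_0^r$, Theorem~\ref{thm:plastic} applies with $r$ and $\alpha=\alpha_0$. It yields $\gamma_r=\gamma(r,\alpha_0)<1$, constants $K_r$, and an $L(U)$-free set $A_G\subseteq\{0,\ldots,h-1\}$ with $|A_G|\ge h\exp(-K_r(\log h)^{\gamma_r})$. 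Renaming variables preserves freeness, so $A_G$ is $E_G(p)$-free over $\mathbb Z$.

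To make Theorem~\ref{thm:plastic} applicable, I must regard each $G$ as giving a fixed plastic family indexed by $p$. Its constants are uniform because $\mathcal G$ is finite and independent of $p$, and the ordering and sign choices stabilize for large $p$ by the dominance argument. I then set $\gamma=\max_r\gamma_r<1$ and $K=\max_r K_r$. Since $(\log h)^{\gamma_r}\le(\log h)^\gamma$, every $A_G$ satisfies the bound with these common constants.

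\emph{Step 2.} I apply Lemma~\ref{lem:common} (in its $\{0,\ldots,h-1\}$ form) with $\ell=|\mathcal G|$, which is a constant depending only on $t,k$. This gives one set $S\subseteq\{0,\ldots,m_0-1\}$ that is $E_G(p)$-free over $\mathbb Z$ for every $G$, with $|S|\ge m_0\exp(-K'(\log m_0)^\gamma)$. Because $\log m_0=(1+o(1))\log p$ and $\gamma<1$, this is $p^{1-o(1)}$.

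\emph{Step 3 (lifting from $\F_p$).} Suppose $x_v\in S$ and $\sum_v d_vx_v\equiv0\pmod p$. Using $\sum_v d_v=0$, rewrite the sum as $\sum_v d_v(x_v-x_{v_0})$ for any fixed vertex $v_0$. Each difference $|x_v-x_{v_0}|$ is less than $m_0\le (p-1)/B(p)$. Hence
\[
\Bigl|\sum_v d_v(x_v-x_{v_0})\Bigr|\le \frac{p-1}{B(p)}\sum_v|d_v|\le\frac{p-1}{2}<p.
\]
An integer that is divisible by $p$ and smaller than $p$ in absolute value is zero, so the equation holds over $\mathbb Z$. Integer freeness of $S$ then forces all $x_v$ to be equal.

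The main obstacle is the uniformity in Step 1. Theorem~\ref{thm:plastic} is stated for a single plastic family $R(h)$, whereas here the sets $R$ depend on $p$ through the equations. I would handle this by indexing each of the finitely many graphs $G$ as its own family with $h=m_0(p)$. Since $p\mapsto m_0(p)$ need not be injective or monotone, I would apply the theorem along the sequence of primes, as the paper's remark after Definition~\ref{def:plastic} permits. I would then take maxima of the resulting constants and thresholds over $\mathcal G$.
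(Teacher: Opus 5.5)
Your proposal is correct and follows the paper's proof essentially step for step. For each $G\in\mathcal G$ you apply Lemma~\ref{lem:partial-sums} with $h=m_0(p)$ and then Theorem~\ref{thm:plastic}, take maxima of the constants over the finite family, intersect the sets via Lemma~\ref{lem:common}, and use $m_0\le(p-1)/B(p)$ to lift congruences to integer equations.

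The differences are cosmetic. You center at $x_{v_0}$ before bounding, whereas the paper bounds $|\sum_v d_vx_v|\le(m_0-1)\sum_v|d_v|$ directly because $S\subseteq\{0,\ldots,m_0-1\}$. Your comparison $(\log m_0)^{\gamma_r}\le(\log m_0)^{\gamma}$ also needs $m_0\ge3$, which the paper secures explicitly by enlarging $p_0$.
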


\begin{proof}
For each $G\in\mathcal G$, apply Lemma~\ref{lem:partial-sums} with $h=m_0(p)$, and then apply Theorem~\ref{thm:plastic}.
This gives an integer $E_G(p)$-free set
$A_G\subseteq\{0,\ldots,m_0-1\}$ of size at least
$m_0\exp\bigl(-K_G(\log m_0)^{\gamma_G}\bigr)$ for some
$0<\gamma_G=\gamma(r,\alpha_0)<1$, $K_G>0$ and all primes above a
threshold $p_G$.  Here the constants also cover all $r!$ possible
permutations of the associated plastic set.  As $\mathcal G$ is a fixed
finite collection, we can take
\[
 \gamma=\max_{G\in\mathcal G}\gamma_G<1,\qquad
 K_0=\max_{G\in\mathcal G}K_G,\qquad
 p_0=\max_{G\in\mathcal G}p_G.
\]
After increasing $p_0$ so that $m_0\ge3$, we obtain
\[
 |A_G|\ge m_0\exp\bigl(-K_0(\log m_0)^\gamma\bigr)
 \qquad(G\in\mathcal G, p\ge p_0).
\]
Lemma~\ref{lem:common}, applied in this integer interval, supplies a
common solution-free set $S\subseteq\{0,\ldots,m_0-1\}$ with
\[
             |S|\ge \frac{m_0}{\exp(O((\log m_0)^\gamma))}
                  =p^{1-o(1)}.
\]
For every $G\in\mathcal G$ and all $x_v\in S$, the definition of $m_0$ gives
\[
 \left|\sum_vd_v(G;p)x_v\right|
 \le (m_0-1)\sum_v|d_v(G;p)|
 \le \frac{(m_0-1)B(p)}2<p.
\]
Thus congruence to zero modulo $p$ implies equality to zero over the
integers.  Viewing $S$ as a subset of $\F_p$, every solution to an equation in
$\calE(p)$ with coordinates in $S$ is therefore an integer solution,
so all its coordinates are equal. This proves the proposition.
\end{proof}

\section{The code and the proof of Theorem~\ref{thm:main}}
\label{sec:code}

For an arbitrary sufficiently large integer $q$, Bertrand's postulate
\cite[Chapter~2]{aigner2018proofs} gives a prime $p$ with $q/2<p\le q$.
Embed the $p$ symbols in an alphabet of size $q$.
Since $p^{k-o(1)}=q^{k-o(1)}$, it suffices to construct a $t$-PHF over
$\F_p$ of size $p^{k-o(1)}$.

Take $M(p)$ from Definition~\ref{def:matrix} and $S(p)$ from
Proposition~\ref{prop:head-set}. In this section, matrices and integer
coefficients are reduced modulo $p$ whenever they act on vectors over
$\F_p$; we retain the same notation for these reductions. Define
\begin{equation*}
 \calC_p=
 \left\{
 M\begin{pmatrix}x\\ \mathbf y\end{pmatrix}:
 x\in S,\ \mathbf y\in\F_p^{k-1}
 \right\}\subseteq\F_p^N.
\end{equation*}
Since $N-k=(t-2)(k-1)\ge0$, the matrix has at least $k$ rows.
By Corollary~\ref{cor:minors}, $M$ has column rank $k$, so the
parameterization is injective.  Consequently,
\begin{equation*}
                  |\calC_p|=|S|p^{k-1}=p^{k-o(1)}.
\end{equation*}

To prove Theorem~\ref{thm:main}, we only need to prove that $\calC_p$ is a $t$-PHF.

Suppose for a contradiction that $t$ distinct codewords are not separated by any of the $N$ coordinates.  In
each row choose one pair of codewords which agree in that row.  Regard the
chosen pair as an edge labeled by that row.  This gives a loopless
multigraph $H$ on $[t]$ with $N=N_t$ distinctly labeled edges.

Notice that no pair of vertices supports $k$ edges. Indeed, equality of the corresponding two codewords in $k$ rows would put the difference of their parameter vectors in the kernel of a $k\times k$ row submatrix of $M$.
Its invertibility, by Corollary~\ref{cor:minors}, forces the two parameter vectors to be equal and hence the two codewords are the same.
Choose a vertex set $W\subseteq[t]$ of minimum cardinality among all choices with $|W|\ge2$ and
\[
                  e_H(W)\ge N_{|W|}.
\]
Such $W$ exists, as the whole vertex set $[t]$ satisfies this inequality.
Moreover, since $N_2=k$ and no pair supports $k$ edges, we have $|W|>2$.
Put $r=|W|\ge3$, and select exactly $N_r$ edges from $H[W]$.
The resulting labeled graph $G$ is exact: by the minimality of $W$,
every proper subset $X\subsetneq W$ with $|X|\ge2$ satisfies
$e_G(X)\le e_H(X)<N_{|X|}$, and therefore
$e_G(X)\le(k-1)(|X|-1)$. For a singleton, the same inequality holds
because there are no loops.

Write the parameter vector of the codeword at \(v\in W\) as
$\mathbf z_v=(x_v,z_{v,2},\ldots,z_{v,k})^{\mathsf T}$, where \(x_v\in S\) and
$\mathbf y_v=(z_{v,2},\ldots,z_{v,k})^{\mathsf T}\in\F_p^{k-1}$. Choose a root \(o\in W\) and identify \(W\)
with \([r]\). Give the edges an initial orientation and then normalize it
as in Lemma~\ref{lem:normalization}. This determines an equation \(E_G(p)\) in
the family of Definition~\ref{def:equation-family}.

If \(e\) is directed from \(u\) to \(v\) and has row label \(\rho(e)\),
then the collision corresponding to \(e\) is
$(M\,\mathbf z_u)_{\rho(e)}= (M\,\mathbf z_v)_{\rho(e)}$, and hence we have the following equation:
\begin{equation*}
 \sum_{j=1}^k M_{\rho(e),j}
       (z_{u,j}-z_{v,j})=0,
\end{equation*}
where $z_{v,1}=x_v$. Multiply the equation for each edge $e$ by
$\lambda_e$ and sum over all edges. We will show that the resulting
equation is $E_G(p)$.

For $v\in W$ and $j\in[k]$, let $c_{v,j}$ denote the coefficient of
$z_{v,j}$ in this weighted sum. We first show that all tail coefficients,
those with $j\ge2$, vanish.
On the one hand, for every non-root vertex \(w\ne o\) and every
\(j\in\{2,\ldots,k\}\), by \eqref{eq:Qlambda} we have
\[
c_{w,j}
=
(Q_o\lambda)_{(w,j)}
=
0.
\]
On the other hand, for each fixed \(j\in\{2,\ldots,k\}\), we claim that
\(c_{o,j}=0\). Indeed, for any edge \(e\) from $u$ to $v$, the contribution of
the weighted collision equation for $e$ to the coefficients of the \(j\)-th
coordinates is
$+M_{\rho(e),j}\lambda_e$ at $u$, and
$-M_{\rho(e),j}\lambda_e$ at $v$.
Hence the total contribution of \(e\) to
\(\sum_{v\in W}c_{v,j}\) is zero. Summing over all edges gives
\[
\sum_{v\in W}c_{v,j}=0.
\]
Since \(c_{v,j}=0\) for every \(v\ne o\), it follows that
$c_{o,j}
=
-\sum_{v\in W\setminus\{o\}}c_{v,j}
=
0.$
To sum up,
\[
c_{v,j}=0
\qquad
(v\in W,\ 2\le j\le k).
\]

It remains to compute the head coefficients $c_{v, 1}$. For an edge $e$ from $u$ to $v$,
\[
\lambda_e M_{\rho(e),1}(x_u-x_v)
=
f_e(x_u-x_v).
\]
Hence, for every \(v\in W\),
$c_{v,1}
=
\sum_{e\text{ leaves }v} f_e
-
\sum_{e\text{ enters }v} f_e
=
d_v$.
Thus the weighted sum is precisely $E_G(p)$ from
Definition~\ref{def:head-equation}.
Since \(x_v\in S\) for every \(v\in W\), Proposition~\ref{prop:head-set} gives
\[
x_v=x_o
\qquad
\text{for every }v\in W.
\]

We now return to the individual collision equations and determine
the tail coordinates $\mathbf y_v$. Let \(Y\) be the column vector indexed in the row order of \(Q_o\), with
\[
Y_{(v,j)}=z_{v,j}-z_{o,j},
\qquad
v\in W\setminus\{o\},\quad 2\le j\le k.
\]
Set $Y_{(o,j)}=0$ for $2\le j\le k$. Since all $x_v$ are equal,
the $j=1$ term of the collision equation for any edge $e$ from $u$ to $v$
vanishes, and the equation becomes
\[
\sum_{j=2}^k
M_{\rho(e),j}
\bigl(Y_{(u,j)}-Y_{(v,j)}\bigr)
=
0.
\]
By Definition~\ref{def:tail-matrix} and \eqref{eq:Q0Y}, after collecting over all \(e\in E(G)\), we get
\[
Q_o^{\mathsf T}Y=\mathbf 0.
\]
By Lemma~\ref{lem:Q}, \(Q_o\) has full row rank over \(\F_p\). Therefore
$Y=\mathbf 0$.
It follows that
$z_{v,j}=z_{o,j}$ for any
$v\in W,\ 2\le j\le k$,
or equivalently, for any $v\in W$,
\[
(x_v,\mathbf y_v)=(x_o,\mathbf y_o).
\]
Thus all parameter vectors indexed by \(W\) are identical, and hence so are the corresponding codewords, contradicting the choice of distinct codewords.

Therefore \(\calC_p\) is a \(t\)-PHF, and we have finished the proof of  Theorem~\ref{thm:main}.

\medskip
\noindent\textbf{Acknowledgements and history of the work.}
Work on this paper began during the preparation of \cite{WEI2025106075}.
We thank Chong Shangguan for suggesting that we pursue this problem using the construction of Alon and Shapira~\cite{alon2006extremal}.
We also thank Hong Liu for helpful discussions and comments on the manuscript.
Partial results were obtained during Xin Wei's visit to the National University of Singapore. 

\medskip
\noindent\textbf{Declaration on the use of AI.}
ChatGPT was used to assist in developing the proof of Lemma~\ref{lem:Q}, as well as preparing the manuscript.
All AI-suggested arguments were independently verified, revised and streamlined by the authors, who take full responsibility for the correctness of all mathematical results presented in this paper.

\bibliographystyle{abbrv}
\bibliography{RCF}

\end{document}